\documentclass[preprint,3p,authoryear,a4paper]{elsarticle}
\usepackage{subfigure, ulem}

\usepackage{amssymb,amsthm, times,amsbsy,amsmath}
\usepackage{delarray,verbatim}
\usepackage[usenames, dvipsnames]{color}

\usepackage{bm}


\usepackage{ifpdf}
\usepackage{color}
\definecolor{webgreen}{rgb}{0,.5,0}
\definecolor{webbrown}{rgb}{.8,0,0}
\definecolor{emphcolor}{rgb}{0.95,0.95,0.95}

\usepackage{hyperref}
\hypersetup{%
	colorlinks=true,
	linkcolor=webbrown,
	filecolor=webbrown,
	citecolor=webgreen,
	breaklinks=true}
\ifpdf \hypersetup{pdftex,
	pdfstartview=FitH, 
	bookmarksopen=true,
	bookmarksnumbered=true
} \else \hypersetup{dvips} \fi

\linespread{1.2}

\newcommand{\ud}{{\rm d}}

\newcommand {\B}{\mathcal{B}}

\numberwithin{equation}{section}

\newtheorem{theorem}{Theorem}[section]

\newtheorem{remark}{Remark}[section]
\newtheorem{lemma}{Lemma}[section]

\numberwithin{remark}{section} \numberwithin{proposition}{section}
\numberwithin{corollary}{section}
\newcommand {\R}{\mathbb{R}}

\newcommand {\p}{\mathbb{P}}

\newcommand {\E}{\mathbb{E}}

\newcommand{\diff}{{\rm d}}

\newcommand{\lev}{L\'{e}vy }

\newcommand{\e}{\mathbb{E}}

\begin{document}

\begin{frontmatter}

\title{On optimal joint reflective and refractive dividend \\ strategies in spectrally positive L\'evy models}

\author[UNSW,UdeM]{Benjamin Avanzi\corref{cor}}
\ead{b.avanzi@unsw.edu.au}

\author[Mex]{Jos\'e-Luis P\'erez}
\ead{jluis.garmendia@cimat.mx}

\author[UNSW]{Bernard Wong}
\ead{bernard.wong@unsw.edu.au}

\author[Kan]{Kazutoshi Yamazaki}
\ead{kyamazak@kansai-u.ac.jp}

\cortext[cor]{Corresponding author. }

\address[UNSW]{School of Risk and Actuarial Studies, UNSW Australia Business School, UNSW Sydney NSW 2052, Australia}
\address[UdeM]{D\'epartement de Math\'ematiques et de Statistique, Universit\'e de Montr\'eal, Montr\'eal QC  H3T 1J4, Canada}
\address[Mex]{Department of Probability and Statistics, Centro de Investigaci\'on en Matem\'aticas A.C. Calle Jalisco s/n. C.P. 36240, Guanajuato, Mexico}
\address[Kan]{Department of Mathematics, Faculty of Engineering Science, Kansai University, 3-3-35 Yamate-cho, Suita-shi, Osaka 564-8680, Japan} 

\begin{abstract}
The expected present value of dividends is one of the classical stability criteria in actuarial risk theory. In this context, numerous papers considered threshold (refractive) and barrier (reflective) dividend strategies. These were shown to be optimal in a number of different contexts for bounded and unbounded payout rates, respectively.

In this paper, motivated by the behaviour of some dividend paying stock exchange companies, we determine the optimal dividend strategy when both continuous (refractive) and lump sum (reflective) dividends can be paid at any time, and if they are subject to different transaction rates.

We consider the general family of spectrally positive L\'evy processes. Using scale functions, we obtain explicit formulas for the expected present value of dividends until ruin, with a penalty at ruin. We develop a verification lemma, and show that a two-layer $(a,b)$ strategy is optimal. Such a strategy pays continuous dividends when the surplus exceeds level $a>0$, and all of the excess over $b>a$ as lump sum dividend payments. Results are illustrated.
\end{abstract}

\begin{keyword}
Surplus models  \sep Optimal dividends \sep Threshold strategy \sep Barrier strategy \sep Transaction costs

JEL code: 
C44 \sep 
C61 \sep 
G24 \sep 
G32 \sep 
G35 

\end{keyword}

\end{frontmatter}

\section{Introduction}

Actuarial surplus models were originally developed by pioneers \citet*{Lun09} and \citet*{Cra30,Cra55}. The classical surplus model is hence referred to as the `Cram\'er-Lundberg' model. Its `dual' model \citep*[see][]{MaRu04,AvGeSh07} has gained considerable traction in recent years. Besides its intrisic interest, useful links and synergies to the Cram\'er-Lundberg model can be found \citep*[see, e.g.][and also Remark \ref{R_CL}]{AfCaEg13}.

In this paper, we focus on the development of a new type of optimal dividend strategies in the dual model. While the probability of ruin is the traditional measure of stability (see \citet*{AsAl10} for a comprehensive reference and \citet*{Buh70} for a description of the stability problem), an alternative criterion was introduced by \citet*{deF57}. Bruno de Finetti argued the surplus should not implicitly be assumed to go to infinity when studying the probability of ruin---the only way for the probability of ruin not to be certain is for the surplus to grow to infinity. Recognising that surplus leakages were bound to happen, he suggested to study their expected present value instead. This led to a very prolific literature \citep*[see][for comprehensive reviews]{AlTh09,Ava09}.

One has to determine when and how much dividends to pay---the answer to which is referred to as a \textit{dividend strategy}. Traditionally, researchers focused on determining the dividend strategy that maximises the expected present value of dividends (without modifications). Main references are \citet*{Ger69,Loe08} for the Cram\'er-Lundberg model, and \citet*{BaEg08,AvShWo11,BaKyYa12} for the dual model. 
In absence of fixed transaction costs, such optimal strategies are often of the \textit{barrier} (more generally, band) type if dividend payouts are not constrained \citep*[see][for the dual model]{BaKyYa12}, and of \textit{threshold} type if dividend payouts are subject to a maximum payment rate \citep*[see][]{YiWe13}. These two main types lead to \textit{reflection} or \textit{refraction} strategies, respectively; see also \citet*{GeSh06_3}. In presence of fixed transaction costs, impulse strategies appear \citep*[see, e.g.][]{BaKyYa13}.

Often, reflection and refraction strategies are studied separately. However, a combination of both can be observed sometimes in practice, and can be justified from an economic point of view; see also \citet*{GeLo12}, who argue that companies often have distribution strategies that have barrier and threshold components. In essence, the dividend optimisation determines whether surplus dollars are best distributed now, or kept for generating dividends in the future. The existence of a surplus (a buffer) is justified by the existence of frictional costs between capital injections and dividends. In other words, the existence of transaction costs (taxes, financial intermediaries, delays) mean that one should minimise any back and forth movements between companies and their investors. Because of this, if there is not much free money available (if the surplus is low) then it makes sense to minimise distribution. If there is a healthy cushion (arguably the most frequent situation) then a regular flow of dividends can be distributed. If there is a lot of excess money available (due to a particularly profitable year of operation), diminishing marginal profit returns suggest that any excess beyond that comfortable cushion should be distributed to investors. 

The situation we just described is likely to occur when gains follow a stochastic behaviour such as in the dual model. Examples include discovery, commission-based, and mining companies. For instance, in the global mining industry, both BHP Billiton and Rio Tinto (which correspond to two of the largest listed mining companies globally), have declared dividend policies that involve both refractive and reflective components \citep*{BHP16,RioT16}.  Indeed, BHP Billiton states the following in its dividend policy declaration \citep*{BHP16}: ``The Board will assess, every reporting period, the ability to pay amounts \textit{additional to the minimum payment}, in accordance with the capital allocation framework.'' This behaviour is clearly observed in practice. In the United States, `extra dividends' are often paid on top of regular dividends. In this instance, we interpret regular dividends as continuous payments that would be paid under the refraction regime, whereas extra payments are reflective payments. Additionally, `extra' dividends have the advantage of \textit{not} being considered regular (by definition), which presents advantages as investors expect stable, nondecreasing dividend payments \citep*[see, e.g.,][and the references therein]{AvTuWo16c}. Any increase in regular dividends would likely be locked in.

The central question in this paper is how to define the three areas described above in an optimal way. The first area corresponds to no dividend payment, the second to a steady flow of payments---a refraction area, and the third area corresponds to an excess lump sum payment area---a reflection area. The boundaries between the three areas will be denoted $a$ and $b>a$. The reason for the name of ``two-layer $(a,b)$ dividend strategy'' is now transparent, as there are two dividend paying layers: (i) between $a$ and $b$, where dividends are paid continuously and are subject to a maximum payout rate, and (ii) above $b$, where all excess dividends are immediately paid as lump sum dividends to bring back the surplus to level $b$.

As explained earlier, it is very inefficient to distribute money and then raise money again if needed. This is why a money buffer---the surplus---is required. The mathematical way of defining this inefficiency is via transaction costs. Along those lines, it can be arguable that  the cost of a barrier type dividend (modelled as costs based on a singular component of dividend payments) would be higher than the cost of a threshold type dividend (modelled as costs based on the absolutely continuous component). Furthermore, extra dividends have a signalling effect in that they may be interpreted as an indirect commitment to higher returns in future years.

The two-layer $(a,b)$ strategy was first considered by \citet*{Ng10}, who investigated the present value of dividends under a combined threshold and barrier strategy for the compound Poisson dual model, although the issue of optimality was not considered. This is also related to the \textit{multilayer} dividend strategy \citep*[see, e.g.][]{AlHa07}. We consider the much more general case of a spectrally positive \lev process, and investigate the optimality of the two-layer strategy. We show that the global optimality of such a strategy will depend on the relative level of expenses for the different types of dividend payments. The possibility of the optimal strategy involving liquidation, or with persistent refraction but no liquidation, is also highlighted. 

Numerical examples are further used to obtain additional insights. First, we consider a compound Poisson dual model with diffusion and phase-type jumps, that admits an analytical form of scale function; see also \ref{S_PTLP}.  In particular, our problem is a mixture of the cases with only reflective and refractive dividend payments as in \citet*{BaKyYa13} and \citet*{YiWe13} in the dual model.  We confirm through a series of numerical examples that the solutions approach to those in \citet*{BaKyYa13} and \citet*{YiWe13} as the parameters describing the problem approach to some limits. Then with exponential jumps and no diffusion \citep*[such as in][for instance]{AvGeSh07}, we study the impact of the volatility of stochastic gains on the optimal dividend strategy.

The paper is arranged as follows. In section \ref{S_Model} the surplus model is formally introduced.  A useful link between our model and a version of the optimal dividend problem  with additional  capital injection of threshold-type (with ruin) is also made.  Section \ref{S_EPV}
introduces the two-layer $(a,b)$ strategy formally and develops explicit formulas for the expected present value of dividends until ruin and payoff at ruin with the help of scale functions.  Existence and uniqueness of optimal parameters $(a^*,b^*)$, which satisfy certain smoothness conditions, are established in Section \ref{section_a_b}.  A verification lemma is developed in Section \ref{S_opti}, which is subsequently used to show that the two-layer $(a^*,b^*)$ strategy is indeed optimal. Numerical illustrations are discussed in Section \ref{numerical_section}. Section \ref{S_conclusion} concludes.

\section{Model} \label{S_Model}

\subsection{Surplus model}

We consider the surplus $Y=(Y_t; t\geq 0)$ which is a L\'evy process defined on a  probability space $(\Omega, \mathcal{F}, \p)$.  For $x\in \R$, we denote by $\p_x$ the law of $Y$ when it starts at $x$ and write for convenience  $\p$ in place of $\p_0$. Accordingly, we write $\e_x$ and $\e$ for the associated expectation operators. We shall assume throughout that $Y$ is spectrally positive, meaning here that it has no negative jumps and that it is not a subordinator. This allows us to define the Laplace exponent $\psi_Y(\theta):[0,\infty) \to \R$, i.e.
	\[
	\e\big[e^{-\theta Y_t}\big]=:e^{\psi_Y(\theta)t}, \qquad t, \theta\ge 0,
	\]
	given by the L\'evy-Khintchine formula
	\begin{equation}
		\psi_Y(\theta):=\gamma_Y\theta+\frac{\sigma^2}{2}\theta^2+\int_{(0,\infty)}\big(e^{-\theta x}-1+\theta x\mathbf{1}_{\{x<1\}}\big)\Pi(\ud x), \quad \theta \geq 0,\label{Laplace_Y}
	\end{equation}
	where $\gamma_Y \in \R$, $\sigma\ge 0$, and $\Pi$ is a measure on $(0,\infty)$ called the L\'evy measure of $Y$ that satisfies
	\[
	\int_{(0,\infty)}(1\land x^2)\Pi(\ud x)<\infty.
	\]

It is well-known that $Y$ has paths of bounded variation if and only if $\sigma=0$ and $\int_{(0,1)} x\Pi(\mathrm{d}x) < \infty$; in this case, $Y$ can be written as
\begin{equation}
Y_t=-c_Yt+S_t, \,\,\qquad t\geq 0,\notag
\end{equation}
where 
\begin{align}
c_Y:=\gamma_Y+\int_{(0,1)} x\Pi(\mathrm{d}x) \label{def_drift_finite_var}
\end{align}
 and where $(S_t; t\geq0)$ is a driftless subordinator. Note that  necessarily $c_Y>0$, since we have ruled out the case that $Y$ has monotone paths. Its Laplace exponent is given by
\begin{equation*}
\psi_Y(\theta) = c_Y \theta+\int_{(0,\infty)}\big( e^{-\theta x}-1\big)\Pi(\ud x), \quad \theta \geq 0.
\end{equation*}

\subsection{Introduction of dividends}
A dividend strategy  is a pair $\pi := \left( A_t^{\pi}, S_t^{\pi}; t \geq 0 \right)$ of nondecreasing, right-continuous, and adapted processes (with respect to the filtration generated by $Y$) such that $A_{0-}^\pi = S_{0-}^\pi = 0$.  In addition,  with $\delta > 0$ fixed, we require that $A^\pi$ is absolutely continuous with respect to the Lebesgue measure of the form $A_t^\pi = \int_0^t a^\pi_s \diff s$, $t \geq 0$, with $a^\pi$ restricted to take values in $[0,\delta]$ uniformly in time. The controlled risk process becomes
\begin{align*}
U_t^\pi &:= Y_t - A_t^\pi - S_t^\pi, \quad t \geq 0.
\end{align*}
Here and throughout the paper, let $\Delta \zeta_t = \zeta_t - \zeta_{t-}$ for any process $\zeta$.
Let $\mathcal{A}$ be the set of all admissible strategies that satisfy the above conditions and\begin{align*}
\Delta S_t^\pi \leq U_{t-}^\pi + \Delta Y_t, \quad t \geq 0.
\end{align*}
Proportional costs are incurred for $A^\pi$ and $S^\pi$, and are denoted $\varepsilon_A$ and $\varepsilon_S$, respectively.  As explained in the introduction, we assume that the former is less than the latter:
\begin{align*}
\varepsilon_A < \varepsilon_S,
\end{align*}
with net dividend rates
\begin{align*}
\beta_A := 1-\varepsilon_A > 1- \varepsilon_S =: \beta_S,
\end{align*}
respectively. We additionally assume $\varepsilon_S < 1$ so that $\beta_A > \beta_S > 0$.  Note that $\varepsilon_A, \varepsilon_S$ could technically be negative.

We want to maximize over all admissible strategies $\pi$ the total expected dividends  until ruin less transaction costs and terminal payoff  $\tilde{\rho}\in \R$ (more precisely a penalty if negative): with $q > 0$,
\begin{align}\label{E_Obj1}
V_{\pi} (x) := \mathbb{E}_x \left( \beta_A \int_0^{\sigma^\pi} e^{-q t} a_t^\pi  \diff t + \beta_S \int_{[0, \sigma^\pi]} e^{-q t} \diff S_t^{\pi} + \tilde{\rho} e^{-q \sigma^\pi}\right), \quad x \geq 0,
\end{align}
where
\begin{align*} 
\sigma^\pi &:= \inf \{ t > 0: U_t^\pi < 0 \},
\end{align*}
is the time to ruin. Here and throughout the paper, let $\inf \varnothing = \infty$.

For the rest of the paper, for notational convenience we will consider the equivalent problem of maximizing
\begin{align}
v_\pi (x):= \frac {V_\pi(x)} {\beta_A}=\mathbb{E}_x \left( \int_0^{\sigma^\pi} e^{-q t} a_t^\pi  \diff t + \beta \int_{[0, \sigma^\pi]} e^{-q t} \diff S_t^{\pi} + \rho e^{-q \sigma^\pi}\right), \quad x \geq 0, \label{v_pi}
\end{align}
where $\beta:= \beta_S / \beta_A \in (0, 1)$ and $\rho := \tilde{\rho} / \beta_A \in \R$. 
Our objective is to compute the \textit{value function}
\begin{align*}
v(x) = \sup_{\pi \in \mathcal{A}} v_\pi (x), \quad x \geq 0,
\end{align*}
and obtain the optimal strategy that attains it, if such a strategy exists.
It is clear that $\beta_A v(x) = \sup_{\pi \in \mathcal{A}} V_\pi (x)$, which are both maximized by the same strategy.

For the rest of the paper, we assume that the expected profit per unit of time (drift of the surplus before dividends)
	\begin{align}
	\E [Y_1] = - \psi_Y'(0+) < \infty,
	\end{align}
	so that the considered problem will have nontrivial solutions.

\begin{remark}
There are alternative ways of interpreting the two-layer $(a,b)$ strategy. Assume that the deterministic drift in the middle layer  is the actual expense rate. The top layer corresponds then to a `traditional' barrier strategy. The main difference is in the bottom layer, where the drift is effectively pushed up by continuous capital injections. As shown below, maximisation of dividends net of capital injections in this context is a problem which is equivalent to that considered in this paper.

The underlying surplus process is again a spectrally positive \lev process $\hat{Y}$.   The strategy is given by a pair $\vartheta = (L^\vartheta, S^\vartheta)$ where $L^{\vartheta}$ models the capital injection while $S^\vartheta$ models the (usual) dividend.  We require that $L^\vartheta$ (as in $A^\pi$ above) is absolutely continuous with respect to the Lebesgue measure of the form $L_t^\vartheta = \int_0^t l^\vartheta_s \diff s$, $t \geq 0$, with $l^\vartheta$ restricted to take values in $[0,\delta]$ uniformly in time. The controlled surplus process is then $U_t^\vartheta = \hat{Y}_t - S_t^\vartheta + \hat{L}_t^{\vartheta}$, $t \geq 0$.

 The problem is  to maximize, over all admissible strategies $\vartheta$, the total dividend until ruin less capital injection with additional terminal payoff: for $\beta_A > \beta_S$ and $\hat{\rho} \in \R$,
\begin{align*}
\hat{V}_{\vartheta} (x) := \mathbb{E}_x \left(  \beta_S \int_{[0, \sigma^\vartheta]} e^{-q t} \diff S_t^{\vartheta} - \beta_A \int_0^{\sigma^\vartheta} e^{-q t} l_t^\vartheta \diff t + \hat{\rho} e^{-q \sigma^\vartheta} \right), \quad x \geq 0,
\end{align*}
where
\begin{align*}
\sigma^\vartheta &:= \inf \{ t > 0: U_t^\vartheta < 0 \}.
\end{align*}

This can be easily transformed to the problem of maximizing \eqref{E_Obj1} above.  To see this,  define
\begin{align*}
 Y_t := \hat{Y}_t + \delta t \quad \textrm{and} \quad A_t^\vartheta := \delta t - L_t^\vartheta, \quad t \geq 0.
\end{align*}
Then $U_t^\vartheta = (\hat{Y}_t + \delta t) - S_t^\vartheta + (\hat{L}_t^\vartheta - \delta t) = Y_t - S_t^\vartheta - A_t^\vartheta$ and for all $x \geq 0$
\begin{align*}
\hat{V}_{\vartheta} (x) &= \mathbb{E}_x \left(  \beta_S \int_{[0, \sigma^\vartheta]} e^{-q t} \diff S_t^{\vartheta} + \beta_A \int_0^{\sigma^\vartheta} e^{-q t}  \diff A_t^\vartheta  - \beta_A \delta \int_0^{\sigma^\vartheta} e^{-q t}  \diff t + \hat{\rho} e^{-q \sigma^\vartheta} \right) \\
&= \mathbb{E}_x \left(  \beta_S \int_{[0, \sigma^\vartheta]} e^{-q t} \diff S_t^{\vartheta} + \beta_A \int_0^{\sigma^\vartheta} e^{-q t}  \diff A_t^\vartheta  + \tilde{\rho} e^{-q \sigma^\vartheta}    \right) - \frac {\beta_A \delta} q,
\end{align*}
with $\tilde{\rho} = \hat{\rho}+ \beta_A \delta / q$.
In other words, the problem reduces to maximizing \eqref{E_Obj1}.   Here, for $Y$ to be a spectrally positive \lev process (so that it is not a subordinator), it is required, when $\sigma = 0$, the drift parameter $\hat{c}_Y$ of $\hat{Y}$ must be larger than $\delta$.

\end{remark}

\section{The two-layer $(a,b)$ strategy}
\label{S_EPV}

The objective of this paper is to show the optimality of what we call the \textit{two-layer $(a,b)$ strategy} for a suitable choice of $(a,b)$.  In this section, we shall first define the strategy and compute the expected present value \eqref{v_pi} under this strategy.  Toward this end, we give a brief review of the \textit{refracted-reflected \lev process} of \citet*{PeYa15} and also the \textit{scale function}.

\subsection{Definition of the two-layer $(a,b)$ strategy}\label{S_abstrat} 

We conjecture that the type of strategy that will maximise \eqref{E_Obj1} is such that
\begin{enumerate}
\item (no-dividend area) dividends are not paid when the surplus is less than some level $a \geq0$;
\item  (refractive area) dividends are paid continuously at maximum rate $\delta$ when the surplus is  
more than $a$, but less than $b>a$;
\item (reflective area) dividends are paid as a lump sum such that the process is reflected at $b$ if the process exceeds $b$; in other words, any excess is immediately paid as a lump sum.
\end{enumerate}
Such a strategy will be called a two-layer $(a,b)$ strategy and is denoted by $\pi_{a,b}$ throughout this paper. This is illustrated on the left hand side of Figure \ref{F_samplepaths}. 

\begin{remark}\label{R_CL}
It is remarkable that maximisation of \eqref{E_Obj1} as discussed in this paper is a problem that is intimately related to the maximisation of dividends under a threshold strategy in a spectrally negative model with (forced) capital injections. This is because one controlled sample path is a reflection of the other; see Figure \ref{F_samplepaths}. Thanks to this nice property, some quantities can be drawn directly from \citet*{PeYa15} after appropriate change of variables. Note that neither optimal strategy parameters nor optimality of the strategy were considered in \citet*{PeYa15}. In any case, optimisation would be different, because ruin occurs in this paper's framework, whereas it never occurs in the framework of \citet*{PeYa15}.
\end{remark}
\begin{figure}[htb]
\begin{center}
\subfigure{\includegraphics[height=6cm]{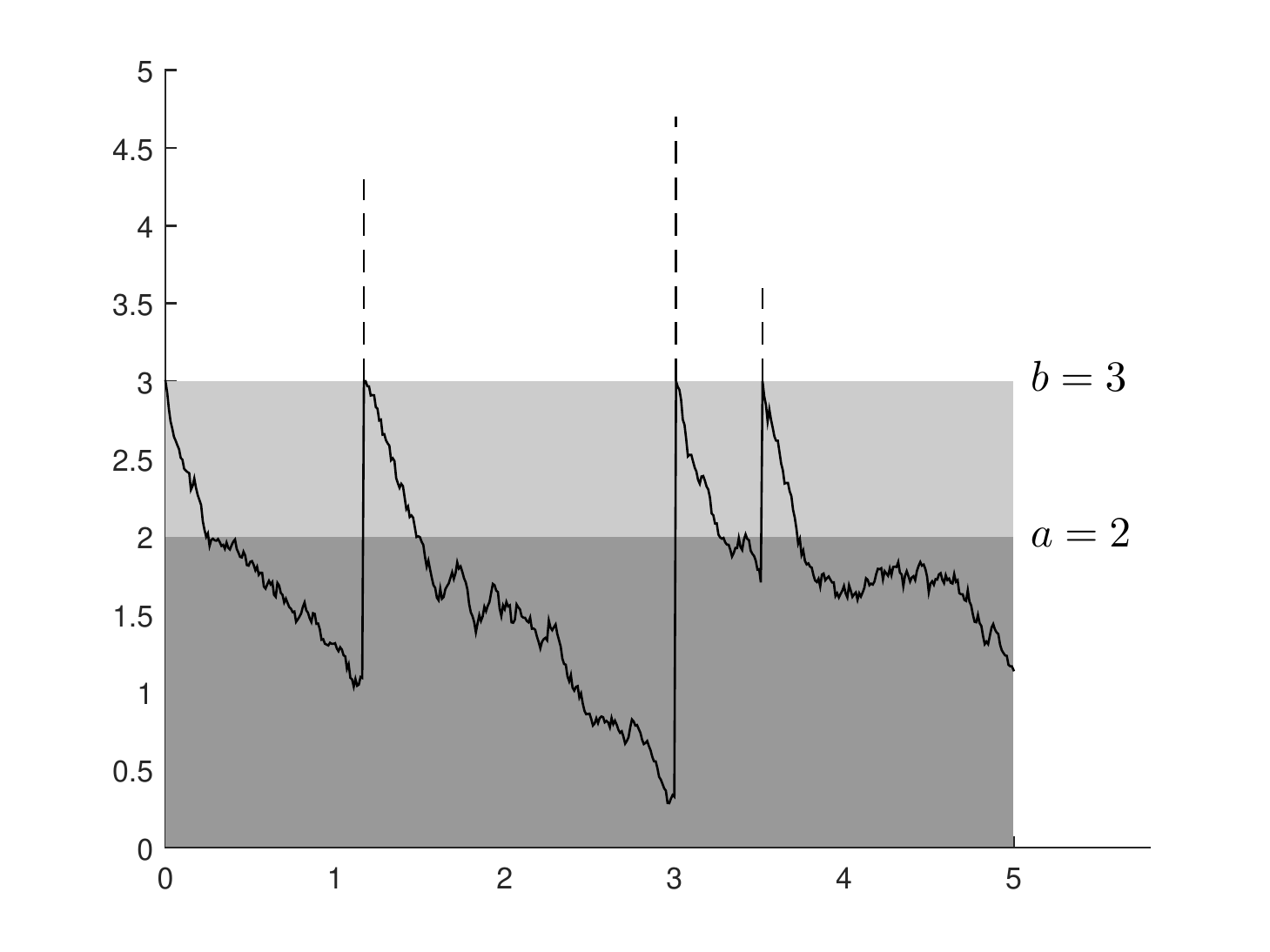}}\hfill
\subfigure{\includegraphics[height=6cm]{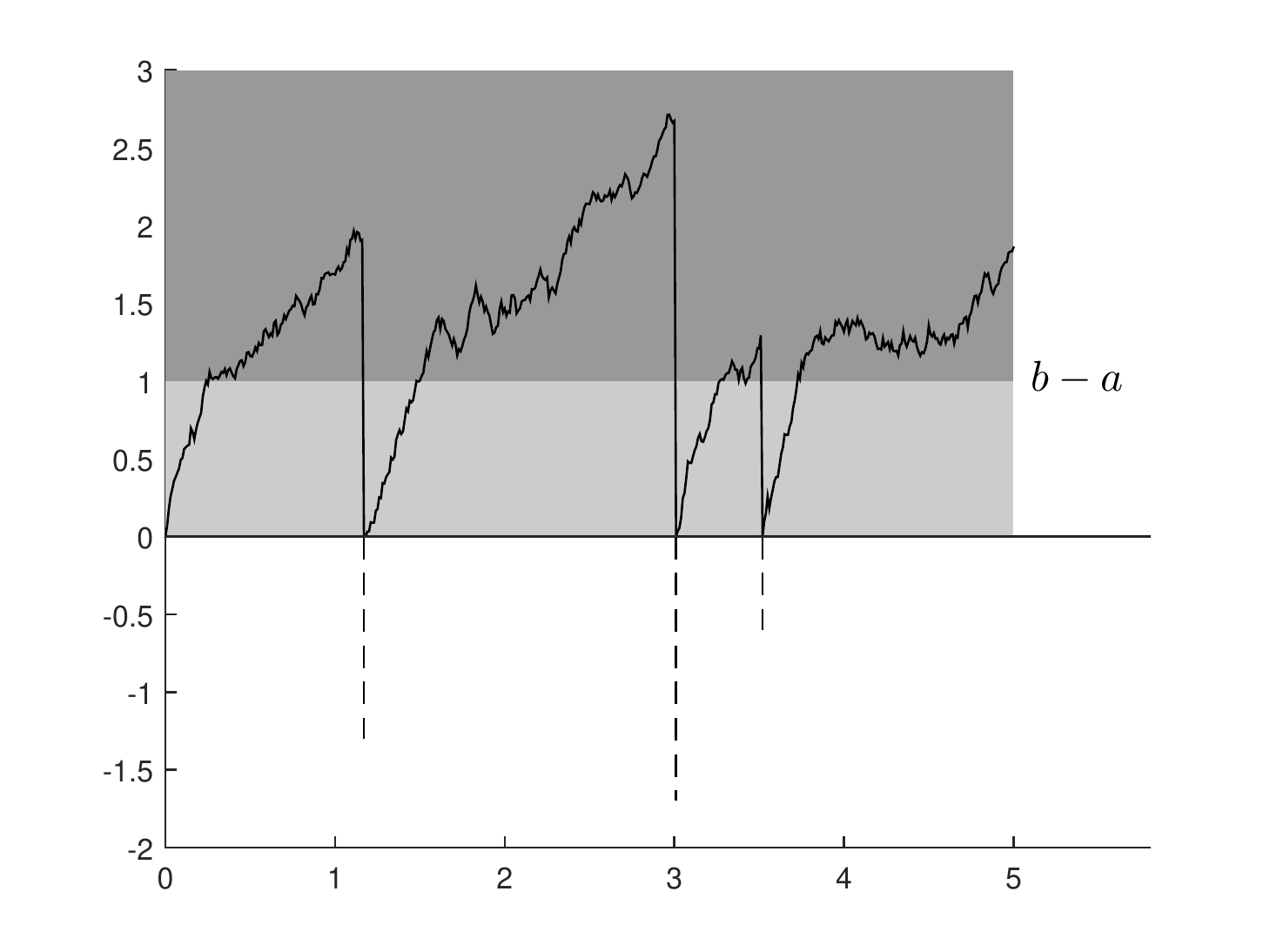}}
\end{center}
\caption{A sample path of $U^{a,b}$ under a two-layer $(a,b)$ strategy (left), and its corresponding spectrally negative refracted-reflected \lev process $\tilde{U}^{0,b-a} \equiv b - U^{a,b}$ (right) as in \eqref{relation_U_U_tilde}.}
\label{F_samplepaths}
\end{figure}

\subsection{Surplus after dividends}

The controlled surplus process, after the distribution of dividends according to a two-layer $(a,b)$ strategy, is denoted by
	\begin{align}
	U^{a,b}_t=Y_t-A^{a,b}_t-S_t^{a,b}, \quad t \geq 0,  \label{U_a_b}
	\end{align}
	where $A_t^{a,b}$ and $S_t^{a,b}$ are the (cumulative) amounts of dividends collected until $t \geq 0$ in the refractive and reflective areas, respectively.  In particular, we can write
		\begin{equation*}
		A^{a,b}_t :=\delta\int_0^t1_{\{U^{a,b}_s >a\}} \diff s, \quad t \geq 0.
	\end{equation*}
	These processes are well-defined by the following arguments.
	
	For fixed $\delta> 0$ and $a \geq 0$, the \textit{refracted spectrally positive \lev process $H$}  is defined as the unique strong solution to the stochastic differential equation (SDE)
\begin{equation}
 H_t= Y_t- \delta \int_0^t 1_{\{H_s>a\}}\diff s,\qquad\text{$t\geq0$;}\label{refracted_levy}
 \end{equation}
 see \citet*{KyLo10} for the spectrally negative \lev case and  \citet*{PeYa15b} for the discussion on the existence/uniqueness for the spectrally positive case.
 Informally speaking, a linear drift at rate $\delta$ is subtracted from the increments of  the underlying \lev process $Y$ whenever it exceeds $a$.  As a result, the process moves like $Y$ below $a$, and above $a$ it moves like a drift-changed process
 \begin{equation}
X_t:=Y_t-\delta t,\qquad\textbf{$t\geq0$}.\label{def_X}
\end{equation}

We also define, for the process $X$, the \textit{\lev process reflected at the upper boundary $b > a$} given by
\begin{align}
V_t &:= X_t - \sup_{0 \leq s \leq t} (X_s - b) \vee 0.  \label{def_V}
\end{align}
 The supremum term pushes the process downward whenever it attempts to up-cross the level $b$; as a result the process only takes values on $(-\infty, b]$. 
	
The process $U^{a,b}$ in \eqref{U_a_b} is a concatenation of the refracted process $H$ and the reflected process $V$.  It is well-defined because it is exactly the dual of the process recently considered in \citet*{PeYa15}; see also Remark \ref{R_CL}. 
To be more precise, we can write \begin{align}
	 U^{a,b}_t = b-\tilde{U}^{0,b-a}_t, \quad t \geq 0, \label{relation_U_U_tilde}
	 \end{align}
where $\tilde{U}^{0,b-a}$ corresponds to a refracted-reflected L\'evy process of \citet*{PeYa15}, driven by the spectrally negative L\'evy process $-X$ that is reflected from below at $0$ and refracted  at the threshold $b-a$.   It admits a decomposition
		\[
	\tilde{U}^{0,b-a}_t= - X_t-\tilde{A}^{0,b-a}_t+\tilde{S}^{0,b-a}_t, \quad t \geq 0,
	\]
		where we can write
		\begin{equation*}
			\tilde{A}^{0,b-a}_t =\delta\int_0^t1_{\{\tilde{U}^{0,b-a}_s>b-a\}} \diff s, \quad t \geq 0,
		\end{equation*}
and where $\tilde{S}^{0,b-a}$ pushes the process upward so that it does not go below zero. In other words, below $b-a$, it moves like a reflected process driven by $-X$ while, above $b-a$, it moves like a spectrally negative \lev process $-Y$. Figure \ref{F_samplepaths} shows the pathwise relation \eqref{relation_U_U_tilde} between $U^{a,b}$ and $\tilde{U}^{0,b-a}$.

For the rest of this paper, let, for all $0 \leq a < b$ and $x \geq 0$, the expected present value of dividends and additional terminal payoff \eqref{v_pi} under  the two-layer $(a,b)$ strategy be denoted
\begin{align}
		v_{a,b}(x)&:= \E_x\left(\int_0^{\sigma_{a,b}}e^{-qt}\diff A_t^{a,b}+\beta\int_{[0,\sigma_{a,b}]}e^{-qt}\diff S^{a,b}_t + \rho e^{-q \sigma_{a,b}}  \right), \label{v_a_b}
\end{align}
with its ruin time
\begin{align*}
\sigma_{a,b} := \sigma^{\pi_{a,b}} = \inf \{ t > 0: U_t^{a,b} < 0 \}.
\end{align*}
By the relationship \eqref{relation_U_U_tilde} and the results in  \citet*{PeYa15}, this can be computed explicitly.  To this end, we first review scale functions.

\subsection{Scale functions} 
Scale functions are the primary tool to derive \eqref{v_a_b}. We review some of the most relevant results in this section, so that we can easily refer to them later. For a comprehensive discussion on the scale function and its applications, we refer the reader to \citet*{Kyp06} and \citet*{KuKyRi13}.

Fix $q > 0$. 
We use $\mathbb{W}^{(q)}$ and $W^{(q)}$, respectively, for the scale functions of the spectrally negative \lev processes $-Y$ and  $-X$ (see \eqref{def_X} for the latter).  These are the mappings from $\R$ to $[0, \infty)$ that take value zero on the negative half-line, while on the positive half-line they are continuous and strictly increasing functions that are defined by their Laplace transforms:
\begin{align} \label{scale_function_laplace}
\begin{split}
\int_0^\infty  \mathrm{e}^{-\theta x} \mathbb{W}^{(q)}(x) \diff x &= \frac 1 {\psi_Y(\theta) -q}, \quad \theta > \varphi(q), \\
\int_0^\infty  \mathrm{e}^{-\theta x} W^{(q)}(x) \diff x &= \frac 1 {\psi_X(\theta)-q}, \quad \theta > \Phi(q), 
\end{split}
\end{align}
where $\psi_Y$ is defined in \eqref{Laplace_Y}, $\psi_X(\theta) := \psi_Y(\theta) + \delta \theta$, $\theta \geq 0$, is the Laplace exponent for $X$, and
\begin{align}
\begin{split}
\varphi(q) := \sup \{ \lambda \geq 0: \psi_Y(\lambda) = q\} \quad \textrm{and} \quad  \Phi(q) := \sup \{ \lambda \geq 0: \psi_X(\lambda) = q\}. \notag
\end{split}
\end{align}
By the strict  convexity of $\psi_Y$, we derive the inequality $\varphi(q) > \Phi(q) > 0$. We also define $c_X$ and $\gamma_X$ for $\psi_X$ analogously to $c_Y$ and $\gamma_Y$.

We also define, for $x \in \R$, 
\begin{align*}
\overline{\mathbb{W}}^{(q)}(x) &:=  \int_0^x \mathbb{W}^{(q)}(y) \diff y, \\
\mathbb{Z}^{(q)}(x) &:= 1 + q \overline{\mathbb{W}}^{(q)}(x),  \\
\overline{\mathbb{Z}}^{(q)}(x) &:= \int_0^x \mathbb{Z}^{(q)} (z) \diff z = x + q \int_0^x \int_0^z \mathbb{W}^{(q)} (w) \diff w \diff z.
\end{align*}
Noting that $\mathbb{W}^{(q)}(x) = 0$ for $-\infty < x < 0$, we have
\begin{align}
\overline{\mathbb{W}}^{(q)}(x) = 0, \quad \mathbb{Z}^{(q)}(x) = 1,  \quad \textrm{and} \quad \overline{\mathbb{Z}}^{(q)}(x) = x, \quad x \leq 0.  \label{z_below_zero}
\end{align}
In addition, we define $\overline{W}^{(q)}$, $Z^{(q)}$ and $\overline{Z}^{(q)}$ analogously for $-X$.

Regarding their asymptotic values as $x \downarrow 0$ we have, as in Lemmas 3.1 and 3.2 of \citet*{KuKyRi13}, 
\begin{align}\label{eq:Wqp0}
\begin{split}
 \mathbb{W}^{(q)} (0) &= \left\{ \begin{array}{ll} 0 & \textrm{if $Y$ is of unbounded
variation,} \\ c_Y^{-1} & \textrm{if $Y$ is of bounded variation,}
\end{array} \right. \\
  W^{(q)} (0) &= \left\{ \begin{array}{ll} 0 & \textrm{if $X$ is of unbounded
variation,} \\ c_X^{-1} & \textrm{if $X$ is of bounded variation,}
\end{array} \right.  
\end{split}
\end{align}
and 
\begin{align*}
\mathbb{W}^{(q) \prime} (0+) &:= \lim_{x \downarrow 0}\mathbb{W}^{(q)\prime} (x+) =
\left\{ \begin{array}{ll}  \frac 2 {\sigma^2} & \textrm{if }\sigma > 0, \\
\infty & \textrm{if }\sigma = 0 \; \textrm{and} \; \Pi(0,\infty) = \infty, \\
\frac {q + \Pi(0, \infty)} {c_Y^2} &  \textrm{if }\sigma = 0 \; \textrm{and} \; \Pi(0,\infty) < \infty,
\end{array} \right. \\
W^{(q)\prime} (0+) &:= \lim_{x \downarrow 0}W^{(q) \prime} (x+) =
\left\{ \begin{array}{ll}  \frac 2 {\sigma^2} & \textrm{if }\sigma > 0, \\
\infty & \textrm{if }\sigma = 0 \; \textrm{and} \; \Pi(0, \infty) = \infty, \\
\frac {q + \Pi(0, \infty)} {c_X^2} &  \textrm{if }\sigma = 0 \; \textrm{and} \; \Pi(0, \infty) < \infty.
\end{array} \right. 
\end{align*}

\begin{remark} \label{remark_smoothness}
Scale functions are reasonably smooth. Indeed, if $Y$ is of unbounded variation or the \lev measure is atomless, it is known that $\mathbb{W}^{(q)}$ and $W^{(q)}$ are $C^1(\R \backslash \{0\})$.  For more comprehensive results on smoothness of scale functions, see \citet*{ChKySa11}.
\end{remark}

\subsection{Expected present value of dividends with terminal payoff/penalty}

We shall now obtain the expected present value of dividends and terminal payoff/penalty \eqref{v_a_b} using the scale function. Define, for all $z, c \in \R$, 
		\begin{align*}
				r^{(q)}_{c}(z)&:= Z^{(q)}(z)+q\delta\int_{c}^{z}\mathbb{W}^{(q)}(z-y)W^{(q)}(y)\ud y, \\
		\tilde{r}^{(q)}_{c}(z)
		&:= R^{(q)} (z) +\delta\int_{c}^z\mathbb{W}^{(q)}(z-y)Z^{(q)}(y)\ud y, 
	\end{align*}
with
\begin{align*}
R^{(q)} (z) := \overline{Z}^{(q)}(z) + \frac {\psi_X'(0+)} q, \quad z \in \R.
\end{align*}
The following is a direct consequence of the results obtained in \citet*{PeYa15}. 
\begin{lemma}  \label{vf_sp} For all $0 \leq a < b$ and $x \geq 0$, we have

\begin{align}
		v_{a,b}(x)
	&=	 -  \frac {\Gamma(a,b)} q \frac{r^{(q)}_{b-a}(b-x)}{r^{(q)}_{b-a}(b)} + \frac {\delta} q \mathbb{Z}^{(q)}(a-x) -\beta  \tilde{r}^{(q)}_{b-a}(b-x),
		 \label{v_a_b_2}
	\end{align}
	where we define, for $0 \leq a \leq b$,
\begin{align} \label{big_gamma}
\begin{split}
\Gamma(a,b) &:=  \delta \mathbb{Z}^{(q)}(a)-q \rho  -q\beta\tilde{r}^{(q)}_{b-a}(b).
\end{split}
\end{align}
In particular for $x \geq b$, $v_{a,b}(x)= \beta(x-b)+v_{a,b}(b)$.
\end{lemma}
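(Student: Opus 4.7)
\medskip

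\textbf{Proof plan for Lemma \ref{vf_sp}.}

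The plan is to exploit the pathwise identity \eqref{relation_U_U_tilde}, $U^{a,b}_t = b - \tilde{U}^{0,b-a}_t$, to recast the three expectations in \eqref{v_a_b} as fluctuation identities for the refracted-reflected spectrally negative L\'evy process $\tilde{U}^{0,b-a}$ of \citet*{PeYa15}, and then read off the answer from their results. First I would observe that under this duality the ruin time $\sigma_{a,b}$ becomes the first passage time $\tau_b^+ := \inf\{t>0 : \tilde{U}^{0,b-a}_t > b\}$ of $\tilde{U}^{0,b-a}$ above the upper level $b$ (started from $b-x$). Moreover, the refractive dividend measure $\mathrm{d}A^{a,b}_t = \delta\,\mathbf{1}_{\{U^{a,b}_t>a\}}\mathrm{d}t$ equals $\delta\,\mathbf{1}_{\{\tilde{U}^{0,b-a}_t<b-a\}}\mathrm{d}t$, and by construction the singular reflector $\mathrm{d}S^{a,b}_t$ coincides with $\mathrm{d}\tilde{S}^{0,b-a}_t$.

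Once this translation is done, the problem reduces to computing, for $y = b - x \in [0,b]$,
\begin{align*}
E_1(y) &:= \mathbb{E}\!\left[\int_0^{\tau_b^+} e^{-qt}\,\delta\,\mathbf{1}_{\{\tilde{U}^{0,b-a}_t<b-a\}}\mathrm{d}t\,\Big|\,\tilde{U}^{0,b-a}_0=y\right],\\
E_2(y) &:= \mathbb{E}\!\left[\int_{[0,\tau_b^+]} e^{-qt}\mathrm{d}\tilde{S}^{0,b-a}_t\,\Big|\,\tilde{U}^{0,b-a}_0=y\right],\\
E_3(y) &:= \mathbb{E}\!\left[e^{-q\tau_b^+}\,\big|\,\tilde{U}^{0,b-a}_0=y\right].
\end{align*}
Each of these is a standard refracted-reflected fluctuation identity established in \citet*{PeYa15}, and each is expressed in terms of $W^{(q)},\mathbb{W}^{(q)},Z^{(q)},\mathbb{Z}^{(q)},\overline{Z}^{(q)}$. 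I would next identify the resulting combinations of scale functions with the functions $r^{(q)}_{b-a}(\cdot)$ and $\tilde{r}^{(q)}_{b-a}(\cdot)$ defined just before the lemma: the denominator $r^{(q)}_{b-a}(b)$ comes from the Laplace transform of $\tau_b^+$, the $\mathbb{Z}^{(q)}(a-x)$ term is what is left from the refracted dividend integral when one uses the identity $\int_0^{\tau_b^+} e^{-qt}\,\delta\,\mathrm{d}t = (\delta/q)(1 - e^{-q\tau_b^+})$ on the part above level $a$, and the $\tilde{r}^{(q)}_{b-a}$ piece arises from $E_2$. Collecting terms and regrouping the pieces that multiply $E_3(y) = r^{(q)}_{b-a}(b-x)/r^{(q)}_{b-a}(b)$ produces precisely the expression $\Gamma(a,b)$ in \eqref{big_gamma} (the $-q\rho$ contribution coming from the terminal payoff $\rho\,\mathbb{E}_x[e^{-q\sigma_{a,b}}]$). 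The $x\ge b$ case is handled separately: on $\{U^{a,b}_0 = x \ge b\}$ the strategy immediately pays a lump sum $x-b$ with net value $\beta(x-b)$ and restarts from $b$, giving $v_{a,b}(x) = \beta(x-b) + v_{a,b}(b)$.

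I expect the main obstacle to be purely bookkeeping: matching the sign conventions and the change of variables $y \leftrightarrow b-x$ between the spectrally positive setting of the present paper and the spectrally negative setting of \citet*{PeYa15}, and then performing the algebraic simplification that collapses the three raw identities into the compact form \eqref{v_a_b_2}. No genuinely new probabilistic argument is required; the work is in verifying that the three scale-function formulas pulled from \citet*{PeYa15} combine exactly into $-\Gamma(a,b)\,r^{(q)}_{b-a}(b-x)/[q\,r^{(q)}_{b-a}(b)] + (\delta/q)\mathbb{Z}^{(q)}(a-x) - \beta\,\tilde{r}^{(q)}_{b-a}(b-x)$, using \eqref{z_below_zero} to handle the regime $x\ge a$ versus $x<a$ uniformly.
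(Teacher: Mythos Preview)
Your proposal is correct and follows essentially the same route as the paper's proof: use the duality \eqref{relation_U_U_tilde} to rewrite $\sigma_{a,b}$, $\mathrm{d}A^{a,b}$ and $\mathrm{d}S^{a,b}$ in terms of the refracted-reflected process $\tilde{U}^{0,b-a}$, invoke the fluctuation identities of \citet*{PeYa15} for the three expectations $E_1,E_2,E_3$, and then combine. The paper handles the refractive piece exactly as you outline (writing $\mathrm{d}A^{a,b}_t = \delta\,\mathrm{d}t - \mathrm{d}\tilde{A}^{0,b-a}_t$ and using $\delta\int_0^{\tau_b^+}e^{-qt}\mathrm{d}t = (\delta/q)(1-e^{-q\tau_b^+})$), cites Proposition~5.1 and Corollaries~4.2--4.3 of \citet*{PeYa15} for the explicit scale-function expressions, and the rest is the same bookkeeping you describe.
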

	
\begin{proof}

Let $\tilde{\E}_{b-x}$ be the expectation under which $\tilde{U}_{0-}^{0,b-a} = - X_0 = - Y_0 = b-x$ and 
	\[
	\tilde{\tau}_b^+:=\inf\{t>0:\tilde{U}^{0,b-a}_t>b\};
	\]
see also Figure \ref{F_samplepaths}.
Using the relation \eqref{relation_U_U_tilde} and Proposition 5.1 in \citet*{PeYa15}, we obtain that
	\begin{align}\label{capital cost}
		\begin{split}
			\E_x&\left(\int_{[0,\sigma_{a,b}]}e^{-qt}\diff S^{a,b}_t\right) =\tilde{\E}_{b-x}\left(\int_{[0,\tilde{\tau}_b^+]}e^{-qt}\diff \tilde{S}^{0,b-a}_t\right)=\tilde{r}^{(q)}_{b-a}(b) \frac{r^{(q)}_{b-a}(b-x)} {r^{(q)}_{b-a}(b)}- \tilde{r}^{(q)}_{b-a}(b-x).
		\end{split}
	\end{align}
Also, by Corollary 4.2 of \citet*{PeYa15},
	\begin{align}
\mathbb{E}_x\left(e^{-q \sigma_{a,b}} \right)=\tilde{\mathbb{E}}_{b-x}\left(e^{-q \tilde{\tau}_b^+} \right)=\frac{r^{(q)}_{b-a}(b-x)}{r^{(q)}_{b-a}(b)}.\notag
\end{align}
	In a similar way, by using the proof of Corollaries 4.2 and 4.3 in \citet*{PeYa15},
		\begin{align*} 
		\E_x\left(\int_{[0,\sigma_{a,b}]}e^{-qt}\diff A^{a,b}_t\right)&=\delta \tilde{\mathbb{E}}_{b-x}\left(\int_{[0,\tilde{\tau}_b^+]}e^{-qt}\diff t\right)-\tilde{\mathbb{E}}_{b-x}\left(\int_{[0,\tilde{\tau}_b^+]}e^{-qt}\diff\tilde{A}^{0,b-a}_t\right) \\
		&=\frac{\delta}{q}- \frac{\delta}{q} \mathbb{Z}^{(q)}(a) \frac{r^{(q)}_{b-a}(b-x)}{r^{(q)}_{b-a}(b)}+ \delta \overline{\mathbb{W}}^{(q)}(a-x). 
	\end{align*}
Putting the pieces together we obtain the result.
\end{proof}

Note that 
for all $0 \leq a < b$ and $x \in \R$,
	\begin{align*}
			r^{(q)}_{b-a}(b-x)
		&= Z^{(q)}(b-x)+q\delta\int_{0}^{a-x}\mathbb{W}^{(q)}(y)W^{(q)}(b-x-y)\ud y, \\
		\tilde{r}^{(q)}_{b-a}(b-x)
				&= R^{(q)} (b-x) +\delta\int_0^{a-x}\mathbb{W}^{(q)}(y)Z^{(q)}(b-x-y)\ud y, 
	\end{align*}
where in particular
	\begin{align*}
			r^{(q)}_{b-a}(b)
		&= Z^{(q)}(b)+q\delta\int_{0}^{a}\mathbb{W}^{(q)}(y)W^{(q)}(b-y)\ud y, \\
		\tilde{r}^{(q)}_{b-a}(b)
				&= R^{(q)} (b) +\delta\int_0^{a}\mathbb{W}^{(q)}(y)Z^{(q)}(b-y)\ud y. 
	\end{align*}
These results are useful in the later sections.

\section{Optimal layer levels $(a^*, b^*)$} \label{section_a_b}
	
	Based on our conjecture that a two-layer $(a,b)$ strategy is optimal, the first step is to identify the candidates $(a^*, b^*)$ so that the degree of smoothness of $v_{a^*, b^*}$ at these points increases by one.  As we will see in the verification step, this will guarantee that the slope of $v_{a^*, b^*}$ becomes $1$ at $a^*$ and $\beta$ at $b^*$.

\subsection{Smooth fit conditions}

	We will find conditions on the thresholds $a$ and $b$ to ensure that the function $v_{a,b}$ as in \eqref{v_a_b_2} is smooth enough. This translates into the requirement that it is continuously differentiable in the bounded variation case, and that it is twice continuously differentiable in the unbounded variation case.
	\par We begin by computing the first derivative of  \eqref{v_a_b_2}. By the continuity of the scale function on $(0, \infty)$, the function $v_{a,b}$ is differentiable on $(0, \infty) \backslash \{a^*, b^*\}$ with 
	\begin{align}
v_{a,b}'(x)&= \frac {\Gamma(a,b)} q \frac {r^{(q)\prime}_{b-a}(b-x)} {r^{(q)}_{b-a}(b)} - \delta \mathbb{W}^{(q)}(a-x) + \beta \tilde{r}^{(q)\prime}_{b-a}(b-x). \label{v_a_b_derivative}
\end{align}
Here,  for $x \in (0, \infty) \backslash \{a^*, b^*\}$,
\begin{align*}
r^{(q) \prime}_{b-a}(b-x) &= q \Big[ W^{(q)}(b-x)+  \delta \Big( \mathbb{W}^{(q)}(0)W^{(q)}(b-x)+\int_x^a\mathbb{W}^{(q)\prime}(u-x)W^{(q)}(b-u) \diff u \Big) 1_{\{x < a\}} \Big], \\
\tilde{r}^{(q) \prime}_{b-a}(b-x) &= Z^{(q)}(b-x)+ \delta \Big( \mathbb{W}^{(q)}(0)Z^{(q)}(b-x)+\int_x^a\mathbb{W}^{(q)\prime}(u-x)Z^{(q)}(b-u)\diff u \Big) 1_{\{x < a\}},
\end{align*}
and, in particular, 
\begin{align}
r^{(q) \prime}_{b-a}(0+) &= qW^{(q)}(0), \quad \tilde{r}_{b-a}^{(q) \prime}(0+) = 1,  \label{r_prime_zero}
\end{align}
and, for all $b > a \geq 0$, 
\begin{align} \label{r_prime_b_a}
\begin{split}
r^{(q) \prime}_{b-a}((b-a)+)  - r^{(q) \prime}_{b-a}((b-a)-) &= q \delta W^{(q)}(b-a) \mathbb{W}^{(q)}(0), \\ 
\tilde{r}^{(q) \prime}_{b-a}((b-a)+) - \tilde{r}^{(q) \prime}_{b-a}((b-a)-) &= \delta\mathbb{W}^{(q)}(0)Z^{(q)}(b-a).
\end{split}
\end{align}
Furthermore, \textit{for the case of unbounded variation} where $W^{(q)}$ and $\mathbb{W}^{(q)}$ are both differentiable on $\R \backslash \{0\}$ by Remark \ref{remark_smoothness},  the function $v_{a,b}$ is twice-differentiable on $(0, \infty) \backslash \{a^*, b^*\}$ with
	\begin{align}
v_{a,b}''(x)&= -\frac {\Gamma(a,b)} q \frac {r^{(q)\prime \prime}_{b-a}(b-x)} {r^{(q)}_{b-a}(b)} + \delta \mathbb{W}^{(q) \prime}(a-x) - \beta \tilde{r}^{(q)\prime \prime}_{b-a}(b-x). \label{v_a_b_twice_derivative}
\end{align}
Here, integration by parts gives, for $x \in (0, \infty) \backslash \{a^*, b^*\}$,
\begin{align} \label{r_second_derivative}
\begin{split}
r^{(q) \prime \prime }_{b-a}(b-x) &= q \Big[ W^{(q)\prime}(b-x)+ \delta \Big( \mathbb{W}^{(q)\prime}(a-x)W^{(q)}(b-a)+\int_x^a\mathbb{W}^{(q)\prime}(u-x)W^{(q)\prime}(b-u) \diff u \Big) 1_{\{x < a\}}\Big], \\
\tilde{r}^{(q) \prime \prime}_{b-a}(b-x) &= qW^{(q)}(b-x)+\delta \Big( \mathbb{W}^{(q)\prime}(a-x)Z^{(q)}(b-a)+\int_x^a\mathbb{W}^{(q)\prime}(u-x)Z^{(q)\prime}(b-u) \diff u \Big) 1_{\{x < a\}},
\end{split}
\end{align}
and, in particular, 
\begin{align}
r^{(q) \prime \prime}_{b-a}(0+) &= qW^{(q) \prime}(0+), \quad \tilde{r}^{(q) \prime \prime}_{b-a}(0+) = q W^{(q)}(0) =0, \label{r_twice_der_zero}
\end{align}
and, for all $b > a \geq 0$, 
\begin{align}\label{r_twice_der_a_b}
\begin{split}
r^{(q) \prime \prime}_{b-a}((b-a)+) - r^{(q) \prime \prime}_{b-a}((b-a)-) &= q \delta \mathbb{W}^{(q)\prime}(0+)W^{(q)}(b-a), \\
\tilde{r}^{(q) \prime \prime}_{b-a}((b-a)+) - \tilde{r}^{(q) \prime \prime}_{b-a}((b-a)-) &= \delta  \mathbb{W}^{(q)\prime}(0+)Z^{(q)}(b-a).
\end{split}
\end{align}

Using these, we pursue the pair $(a,b)$ such that the $v_{a,b}$ becomes smoother at $a$ and $b$.

\begin{description}
\item[Smoothness at $b$:] 
By \eqref{v_a_b_derivative} and \eqref{r_prime_zero},
\begin{align*}
v_{a,b}'(b-)=\frac{W^{(q)}(0)}{r_{b-a}^{(q)}(b)}\Gamma(a,b) +\beta \quad \textrm{and} \quad v_{a,b}'(b+) = \beta.
\end{align*}
By this and \eqref{eq:Wqp0}, the continuous differentiability holds (i.e., $v_{a,b}'(b-) = v_{a,b}'(b+) = \beta$) for any choice of $(a,b)$ for the unbounded variation case while, in the bounded variation case, it holds if and only if
	\begin{equation}
	\label{scsp_2}
	\mathbf{C}_b: \Gamma(a,b) =0
	\end{equation}
	holds.
For the unbounded variation case we further pursue twice continuous differentiability.   By \eqref{v_a_b_twice_derivative} and \eqref{r_twice_der_zero},
\begin{align*}
v_{a,b}''(b-)= -\frac{\Gamma(a,b)}{r^{(q)}_{b-a}(b)}W^{(q)\prime}(0+) 
\quad \textrm{and} \quad v_{a,b}''(b+) = 0,
	\end{align*}
and hence the twice continuous differentiability holds (i.e., $v_{a,b}''(b-) = v_{a,b}''(b+) = 0$) if and only if \eqref{scsp_2} holds.

\item[Smoothness at $a$:]  Regarding the differentiability at $a$, if $a > 0$, by \eqref{v_a_b_derivative} and \eqref{r_prime_b_a}, 
\begin{align*}
v_{a,b}'(a-) - v_{a,b}'(a+)&=   \delta \mathbb{W}^{(q)}(0)  \Big[ {\Gamma(a,b)}  \frac {W^{(q)}(b-a)  } {r^{(q)}_{b-a}(b)} - \beta \gamma(a,b) \Big], 
\end{align*}
where we define, for $0 \leq a < b$,
	\begin{align}
\gamma(a,b) := \beta^{-1}- Z^{(q)}(b-a). \label{small_gamma}
\end{align}
Hence the continuous differentiability at $a$ holds automatically for the case of unbounded variation, while for the case of bounded variation (by  \eqref{eq:Wqp0}) it holds on condition that 
\begin{align}\label{scsp_1}
	\mathbf{C}_a: \frac{W^{(q)}(b-a)}{r^{(q)}_{b-a}(b)} \Gamma(a,b) - \beta \gamma(a,b)
	=0.
\end{align}

In the unbounded variation case we further pursue twice continuous differentiability.
By \eqref{v_a_b_twice_derivative} and \eqref{r_twice_der_a_b}, if $a > 0$,
\begin{align*}
v_{a,b}''(a-) - v_{a,b}''(a+)&= - \delta \mathbb{W}^{(q) \prime}(0+) \Big[ {\Gamma(a,b)} \frac { W^{(q)}(b-a) } {r^{(q)}_{b-a}(b)} - \beta \gamma(a,b)  \Big].
\end{align*}
Hence, the twice continuous differentiability at $a$ holds if and only if \eqref{scsp_1} holds.

Note that, if \eqref{scsp_2} holds, then condition \eqref{scsp_1}  simplifies to 
	\begin{equation}\label{oc_2}
	\mathbf{C}_a': \gamma(a,b) = 0.
	\end{equation}	
	
\end{description}
	
In summary, we have the following lemma.
\begin{lemma} \label{lemma_smooth_fit}

\begin{enumerate}
\item For $0 \leq a < b$, if condition $\mathbf{C}_b$ as in \eqref{scsp_2} holds, then $v_{a,b}$ is continuously differentiable  (resp.\ twice continuously differentiable) at $b$ when $Y$ has paths of bounded (resp.\ unbounded) variation.
\item For $0 < a < b$, if condition $\mathbf{C}_a$ as in \eqref{scsp_1} holds, then $v_{a,b}$ is continuously differentiable  (resp.\ twice continuously differentiable) at $a$ when $Y$ has paths of bounded (resp.\ unbounded) variation.
\end{enumerate}
\end{lemma}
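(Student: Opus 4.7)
The plan is to verify the two smoothness statements by explicitly computing the one-sided limits of $v_{a,b}'$ (and $v_{a,b}''$ in the unbounded variation case) at the points $b$ and $a$, then matching them using the boundary/jump identities already collected in \eqref{r_prime_zero}, \eqref{r_prime_b_a}, \eqref{r_twice_der_zero}, \eqref{r_twice_der_a_b} together with the values of $\mathbb{W}^{(q)}(0)$ and $\mathbb{W}^{(q)\prime}(0+)$ recorded in \eqref{eq:Wqp0}. Since Lemma \ref{vf_sp} gives $v_{a,b}(x) = \beta(x-b) + v_{a,b}(b)$ for $x \geq b$, we already know $v_{a,b}'(b+) = \beta$ and $v_{a,b}''(b+) = 0$, so only the left-hand limits at $b$ require work.

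For the statement at $b$, I would substitute $x = b$ into \eqref{v_a_b_derivative}; since $a < b$, the indicator terms in $r^{(q)\prime}_{b-a}$ and $\tilde{r}^{(q)\prime}_{b-a}$ drop out, and \eqref{r_prime_zero} yields
\[
v_{a,b}'(b-) = \frac{W^{(q)}(0)}{r^{(q)}_{b-a}(b)}\,\Gamma(a,b) + \beta.
\]
In the bounded variation case $W^{(q)}(0) = c_X^{-1} \neq 0$, so matching with $v_{a,b}'(b+) = \beta$ is equivalent to $\Gamma(a,b)=0$, i.e.\ $\mathbf{C}_b$. In the unbounded variation case $W^{(q)}(0) = 0$ so $C^1$ at $b$ is automatic; then \eqref{v_a_b_twice_derivative} and \eqref{r_twice_der_zero} give
\[
v_{a,b}''(b-) = -\frac{\Gamma(a,b)}{r^{(q)}_{b-a}(b)}\,W^{(q)\prime}(0+),
\]
and since $W^{(q)\prime}(0+) \in (0,\infty]$ under unbounded variation, matching $v_{a,b}''(b+)=0$ is again equivalent to $\mathbf{C}_b$.

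For the statement at $a > 0$, the jump formula \eqref{r_prime_b_a} plugged into \eqref{v_a_b_derivative} gives, after factoring,
\[
v_{a,b}'(a-) - v_{a,b}'(a+) = \delta\,\mathbb{W}^{(q)}(0)\Bigl[\Gamma(a,b)\,\frac{W^{(q)}(b-a)}{r^{(q)}_{b-a}(b)} - \beta\gamma(a,b)\Bigr].
\]
In the bounded variation case $\mathbb{W}^{(q)}(0) \neq 0$, so continuous differentiability is equivalent to the bracket vanishing, which is $\mathbf{C}_a$ in \eqref{scsp_1}. In the unbounded variation case $\mathbb{W}^{(q)}(0) = 0$ and $C^1$ is free; the analogous computation using \eqref{r_twice_der_a_b} and \eqref{v_a_b_twice_derivative} gives
\[
v_{a,b}''(a-) - v_{a,b}''(a+) = -\delta\,\mathbb{W}^{(q)\prime}(0+)\Bigl[\Gamma(a,b)\,\frac{W^{(q)}(b-a)}{r^{(q)}_{b-a}(b)} - \beta\gamma(a,b)\Bigr],
\]
and with $\mathbb{W}^{(q)\prime}(0+) \in (0,\infty]$ the twice continuous differentiability holds iff the bracket vanishes, i.e.\ iff $\mathbf{C}_a$ holds.

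There is no genuine obstacle here: most of the work has already been done in the displays preceding the lemma, and the proof is essentially an accounting exercise that reads off the conclusions from \eqref{eq:Wqp0}. The only point requiring slight care is to note that in the unbounded variation case $\mathbb{W}^{(q)\prime}(0+)$ may be $+\infty$ (when $\sigma = 0$ and $\Pi(0,\infty) = \infty$); in that regime the computed jump must be interpreted as forcing the bracket to vanish, and the conclusion still holds exactly as stated.
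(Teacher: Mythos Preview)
Your proposal is correct and follows essentially the same approach as the paper: the paper's ``proof'' is precisely the discussion preceding the lemma (the blocks labelled \textbf{Smoothness at $b$} and \textbf{Smoothness at $a$}), which computes the one-sided derivative jumps via \eqref{r_prime_zero}, \eqref{r_prime_b_a}, \eqref{r_twice_der_zero}, \eqref{r_twice_der_a_b} and reads off the conclusions from \eqref{eq:Wqp0}. Your added remark about the possibility $\mathbb{W}^{(q)\prime}(0+)=+\infty$ is a small clarification the paper leaves implicit.
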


\subsection{Existence and uniqueness of optimal layer levels $(a^*, b^*)$} \label{subsection_existence}

We shall now pursue our candidate optimal two-layer $(a^*, b^*)$ that simultaneously satisfies conditions $\mathbf{C}_a$ and $\mathbf{C}_b$ (and hence $\mathbf{C}_a'$) when $ a^* > 0$ and $b^* > 0$, respectively.  We will see that $a^* = 0$ and/or $b^* = 0$ can happen and in this case $\mathbf{C}_a$ and/or $\mathbf{C}_b$ do not necessarily hold: these correspond to the persistent refraction and liquidation strategies, respectively. 

The key observation here is that  the functions $\Gamma(\cdot, \cdot)$ and $\gamma(\cdot, \cdot)$ as in \eqref{big_gamma} and \eqref{small_gamma}, respectively, are related by the following: 
\begin{align} 
 \frac \partial {\partial a} \Gamma(a,b) = \delta q \mathbb{W}^{(q)}(a)-q\beta \delta \mathbb{W}^{(q)}(a)Z^{(q)}(b-a) = \delta q \beta  \mathbb{W}^{(q)}(a) \gamma(a,b). \label{relation_gamma_gamma}
\end{align}
Because $\delta q \beta  \mathbb{W}^{(q)}(a)$, for $a > 0$, is positive, if we can find a pair $(a^*,b^*)$ such that the curve $a \mapsto \Gamma(a, b^*)$ gets tangent to the $x$-axis at $a^* > 0$, then necessarily
$\Gamma(a^*,b^*) = \gamma(a^*,b^*) = 0$.  See the plots of Figure \ref{figure_value_function} in Section \ref{numerical_section}.

Now, for each fixed $b \geq 0$, we consider the function 
\begin{align*}
\Gamma(\cdot, b) : a \mapsto \Gamma(a,b), \quad a \in [0,b).
\end{align*}
In view of \eqref{small_gamma}, $\gamma(\cdot, b): a \mapsto \gamma(a,b)$ is monotonically increasing on $[0,b]$ and ends at 
\begin{align*}
\gamma(b-,b)  = \beta^{-1} - 1 > 0.\end{align*}
Hence, in view of the relation \eqref{relation_gamma_gamma}, $\Gamma(\cdot, b)$ on $[0,b]$ is either (1) decreasing and then increasing or (2) increasing.  This implies that its minimum
\begin{align}
\underline{\Gamma} (b) := \min_{0 \leq a \leq b} \Gamma(a, b), \label{gamma_minimum}
\end{align}
is attained at, say $a(b)$, which is, for the case (1), the local minimizer $\hat{a}$ such that $\gamma(\hat{a},b) = 0$ or, for the case (2), zero.  On the other hand, because
\begin{equation}  \label{Gamma_derivative_b}
\frac \partial {\partial b} \Gamma(a,b) = -q \beta r^{(q)}_{b-a}(b)
< 0,  \quad 0 \leq a < b,
\end{equation}
we must have that $\underline{\Gamma} (b)$ is monotonically (strictly) decreasing and continuous in $b \geq 0$. 

In view of these, we first start at $b=0$ and increase $b$ until we attain the desired pair $(a^*, b^*)$.  More precisely, we first compute for the case $b = 0$:
\begin{align*}
\underline{\Gamma} (0) = \Gamma(0,0) &= \delta -q \rho -\beta  \psi'_X(0+). \end{align*}

(i) For the case $\underline{\Gamma} (0)  \leq 0$, we set $a^*=b^* = 0$.  

(ii) Otherwise we shall increase the value of $b$ until we get $b^*$ such that $\underline{\Gamma}(b^*) = 0$.  Because \eqref{Gamma_derivative_b} gives a bound:
\begin{align*}
\max_{0 \leq a \leq b} \frac \partial {\partial b} \Gamma(a,b) & \leq -q \beta < 0,
\end{align*}
it is clear that $\underline{\Gamma}(b) \xrightarrow{b \uparrow \infty} -\infty$.
  Hence, there are two scenarios:
\begin{enumerate}
\item[(ii-1)] $\underline{\Gamma}(b^*) =\min_{0 \leq a \leq b^*} \Gamma(a,b^*) = 0$ is attained at zero: then we set $a^* = 0$;
\item[(ii-2)] $\underline{\Gamma}(b^*) =\min_{0 \leq a \leq b^*} \Gamma(a,b^*)= 0$ is attained at a local minimum, which we call $a^*$.
\end{enumerate}
In both cases (ii-1) and (ii-2), the pair $(a^*, b^*)$ satisfy $\mathbf{C}_b$.  In addition,  for (ii-1), we must have that $\Gamma(\cdot, b^*)$ is increasing (or $\gamma(\cdot, b^*) \geq 0$) on $[0, b^*]$ and hence 
\begin{align*}
\gamma(a^*, b^*) = \gamma(0,b^*) 
= \beta^{-1} - Z^{(q)}(b^*) \geq 0.
\end{align*}For (ii-2), $\mathbf{C}_a$ (and hence $\mathbf{C}_a'$ as well) holds.

We summarize these in the following lemma.
\begin{lemma} \label{lemma_existence}
There exist a  pair $(a^*, b^*)$ such that one of the following holds.
\begin{enumerate}
\item[(i)] $a^*=b^* = 0$ and $\underline{\Gamma}(0)= \delta -q \rho  -\beta \psi'_X(0+) \leq 0$.
\item[(ii-1)] $a^* = 0 < b^*$ such that $\Gamma(a^*, b^*) = 0$ and $\beta^{-1}- Z^{(q)}(b^*) \geq 0$, and $\underline{\Gamma}(0)>0$.
\item[(ii-2)] $0 < a^* < b^*$ such that $\Gamma(a^*, b^*) = \gamma(a^*, b^*)  = 0$, and $\underline{\Gamma}(0)>0$.
\end{enumerate}
\end{lemma}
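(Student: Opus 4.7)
The plan is to prove existence constructively, following the three-case structure of the statement and using the gradient identity $\partial_a\Gamma(a,b)=\delta q\beta\,\mathbb{W}^{(q)}(a)\gamma(a,b)$ from \eqref{relation_gamma_gamma} together with the monotonicity of the envelope $\underline{\Gamma}$ in $b$. First I would record that, for each fixed $b>0$, the map $a\mapsto\gamma(a,b)=\beta^{-1}-Z^{(q)}(b-a)$ is continuous and strictly increasing on $[0,b]$ with terminal value $\beta^{-1}-1>0$; combined with $\mathbb{W}^{(q)}(a)>0$ for $a>0$, this forces $a\mapsto\Gamma(a,b)$ on $[0,b]$ to be either (A) strictly increasing (when $\gamma(0,b)\geq 0$) or (B) strictly decreasing then strictly increasing with a unique interior minimizer $\hat a(b)\in(0,b)$ characterised by $\gamma(\hat a(b),b)=0$. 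In both cases the minimum $\underline{\Gamma}(b)=\min_{0\le a\le b}\Gamma(a,b)$ is attained and the minimizer depends continuously on $b$ (by the implicit function theorem in case (B), using that $\gamma$ is strictly increasing in $a$).

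Next I would perform the case split on the sign of $\underline{\Gamma}(0)=\Gamma(0,0)=\delta-q\rho-\beta\psi_X'(0+)$. If $\underline{\Gamma}(0)\leq 0$, we simply set $a^*=b^*=0$, which gives conclusion (i). Otherwise $\underline{\Gamma}(0)>0$, and I would find $b^*>0$ with $\underline{\Gamma}(b^*)=0$ by intermediate value theorem. To set this up, I would show that $\underline{\Gamma}$ is (a) continuous and (b) strictly decreasing on $[0,\infty)$ and (c) tends to $-\infty$. For (a) and (b), let $a(b)$ denote any minimizer; then for $h>0$,
\[
\underline{\Gamma}(b+h)\leq\Gamma(a(b),b+h)=\Gamma(a(b),b)+\int_b^{b+h}\partial_s\Gamma(a(b),s)\,\mathrm{d}s\leq\underline{\Gamma}(b)-q\beta\,h,
\]
using the bound $\partial_b\Gamma(a,b)=-q\beta\,r^{(q)}_{b-a}(b)\leq -q\beta$ from \eqref{Gamma_derivative_b} together with $r^{(q)}_{b-a}(b)\ge Z^{(q)}(b)\ge 1$. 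Combined with the reverse inequality via $a(b+h)$ and the continuity of the scale functions (Remark \ref{remark_smoothness} and \eqref{eq:Wqp0}), this yields continuity; strict monotonicity is immediate, and (c) follows from the same bound. Hence a unique $b^*>0$ with $\underline{\Gamma}(b^*)=0$ exists, and condition $\mathbf{C}_b$ holds at $(a^*,b^*):=(a(b^*),b^*)$.

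Finally I would split according to where the minimum is attained. If $a(b^*)=0$, we are in case (ii-1): since $a\mapsto\Gamma(a,b^*)$ is then increasing on $[0,b^*]$, \eqref{relation_gamma_gamma} forces $\gamma(0,b^*)\geq 0$, i.e.\ $\beta^{-1}-Z^{(q)}(b^*)\ge 0$. If $a(b^*)=\hat a(b^*)\in(0,b^*)$, we are in case (ii-2): the interior first order condition $\partial_a\Gamma(a^*,b^*)=0$ with $\mathbb{W}^{(q)}(a^*)>0$ gives $\gamma(a^*,b^*)=0$, which is precisely condition $\mathbf{C}_a'$ (equivalent to $\mathbf{C}_a$ under $\mathbf{C}_b$).

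The main obstacle I anticipate is the soft analysis of the envelope $\underline{\Gamma}$, specifically establishing its continuity uniformly across the regime change between shapes (A) and (B); once the uniform bound $\partial_b\Gamma\leq -q\beta$ is in hand, strict monotonicity and blow-up to $-\infty$ are immediate, and the remaining verifications at $b^*$ reduce to reading off the first-order condition from the gradient identity \eqref{relation_gamma_gamma}.
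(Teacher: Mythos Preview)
Your proposal is correct and follows essentially the same approach as the paper: the paper's argument in Section~\ref{subsection_existence} likewise uses the gradient identity \eqref{relation_gamma_gamma} to deduce the shape of $a\mapsto\Gamma(a,b)$, the bound $\partial_b\Gamma(a,b)\leq -q\beta$ from \eqref{Gamma_derivative_b} to get strict decrease and blow-up of $\underline{\Gamma}$, and then the same case split at $b^*$ according to whether the minimizer $a(b^*)$ is zero or interior. Your treatment of the continuity of the envelope $\underline{\Gamma}$ is slightly more explicit than the paper's (which simply asserts it), but the overall route is the same.
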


In fact, the criteria for (ii-1) and (ii-2) can be concisely given as follows.  When $\underline{\Gamma}(0) > 0$, we can define the unique root of $\Gamma(0, b_0) = 0$, which is explicitly written:
\begin{align*}
b_0 := (\overline{Z}^{(q)})^{-1} \Big(\frac {\delta -q \rho } {q \beta}- \frac {\psi'_X(0+)} q \Big).
\end{align*}
By construction, $a^* > 0$ if and only if $\underline{\Gamma}(b_0) < 0$.  This holds
 if and only if $\Gamma(\cdot, b_0)$ attains a local minimum---$\gamma(\hat{a}, b_0)  = \beta^{-1} - Z^{(q)} (b_0 - \hat{a}) = 0$ for some $\hat{a} \in (0, b_0)$, or equivalently $b_0 > (Z^{(q)})^{-1} (\beta^{-1})$. In sum, we have the following.

\begin{remark}  \label{remark_b_0}  When $\underline{\Gamma} (0) > 0$, (ii-1) holds if $b_0 \leq (Z^{(q)})^{-1} (\beta^{-1})$ and (ii-2) holds otherwise.  For both cases, we have $b_0 \wedge (Z^{(q)})^{-1} (\beta^{-1}) \leq b^* \leq b_0$.
\end{remark}

\begin{remark}\label{R_values}
Interestingly, in the formulation \eqref{E_Obj1}, the condition expressed in case (i) in Lemma \ref{lemma_existence} can be rewritten as
\begin{equation}\label{liqcondition2} \beta_A\frac{\delta}{q}+\beta_S \frac{\mu_X}{q} \leq \tilde{\rho}, \end{equation}
where  $\mu_X=-\psi_X'(0+)$ is the drift of the process $X$ (after payment of continuous dividends). Equation \eqref{liqcondition2} considers the expected present value of net dividends coming from refractive and reflective dividends (respectively), as opposed to the terminal payoff/penalty. 
\end{remark}

\section{Optimality of the two-layer $(a^*,b^*)$ strategy} \label{S_opti}
For the selected layer levels $(a^*, b^*)$, the form of $v_{a^*, b^*}$ can be written concisely.
For $b^* > 0$ (i.e.\ the case (ii)), because $\mathbf{C}_b$ as in \eqref{scsp_2} holds, \eqref{v_a_b_2} simplifies to
\begin{equation}  \label{v_a_b_formula}
v_{a^*,b^*}(x) = 
\frac{\delta}{q}  \mathbb{Z}^{(q)}(a^*-x)-\beta \tilde{r}^{(q)}_{b^*-a^*}(b^*-x), \quad  x \geq 0,	\end{equation}
	where in particular
	\begin{align}
	v_{a^*,b^*}(x)&=\frac{\delta}{q}-\beta R^{(q)} (b^*-x), \quad a^* \leq  x, \label{value_bw_a_b} \\
	v_{a^*,b^*}(x) &= \beta \Big( x-b^* - \frac {\psi'_X (0+)} q \Big) +\frac{\delta}{q}, \quad x \geq b^*. \label{value_over_b}
	\end{align}
On the other hand, for $a^* = b^* = 0$ (i.e.\ the case (i)), 
		\begin{equation} \label{v_0_0}
		v_{0,0}(x)= \beta x+ \rho, \quad x \geq 0.
	\end{equation}
For both cases, it can be confirmed that 
\begin{align}
v_{a^*,b^*}(0) = \lim_{x \downarrow 0} v_{a^*, b^*}(x) = \rho. \label{limit_at_zero}
\end{align}

We shall now show the optimality of the two-layer $(a^*, b^*)$ strategy.  To this end, we first obtain the verification lemma and then show that the candidate value function $v_{a^*, b^*}$ solves the required variational inequalities.

\subsection{Verification lemma}

We call a measurable function $g$ \textit{sufficiently smooth} if $g$ is $C^1 (0,\infty)$ (resp.\ $C^2 (0,\infty)$) when $X$ has paths of bounded (resp.\ unbounded) variation.
We let $\mathcal{L}_Y$ be the operator for $Y$ acting on a sufficiently smooth function $g$, defined by
\begin{align}
\mathcal{L}_Y g(x)&:= -\gamma_Y g'(x)+\frac{\sigma^2}{2}g''(x) +\int_{(0,\infty)}[g(x + z)-g(x)-g'(x)z\mathbf{1}_{\{0<z<1\}}]\Pi(\mathrm{d}z). \label{generator_Y}
\end{align}

\begin{lemma}[Verification lemma]
	\label{verificationlemma}
	Suppose $\hat{\pi}$ is an admissible dividend strategy such that $v_{\hat{\pi}}$ is sufficiently smooth on $(0,\infty)$, and satisfies
	\begin{align}\label{HJB-inequality}
	\begin{split}
		\sup_{0\leq r\leq\delta} \big((\mathcal{L}_Y - q)v_{\hat{\pi}}(x)-rv'_{\hat{\pi}}(x)+r \big) &\leq 0, \quad x > 0, \\
		v'_{\hat{\pi}}(x)&\geq\beta, \quad x > 0.
		\end{split}
	\end{align} 
	In addition suppose $\rho = v_{\hat{\pi}}(0) \leq \lim_{x \downarrow 0}v_{\hat{\pi}}(x) $. 
	Then $v_{\hat{\pi}}(x)=v(x)$ for all $x\geq0$ and hence $\hat{\pi}$ is an optimal strategy.
\end{lemma}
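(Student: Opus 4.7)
The plan is to fix an arbitrary admissible $\pi \in \mathcal{A}$ and show $v_\pi(x) \leq v_{\hat\pi}(x)$; since $\hat\pi \in \mathcal{A}$ gives $v_{\hat\pi}(x) \leq v(x)$ automatically, the combination yields $v_{\hat\pi} \equiv v$. First I would extend $v_{\hat\pi}$ to the whole real line by setting $v_{\hat\pi}(x) := \rho$ for $x < 0$, consistent with the boundary hypothesis $\rho = v_{\hat\pi}(0) \leq \lim_{x \downarrow 0} v_{\hat\pi}(x)$. The extension is sufficiently smooth on $\R \setminus \{0\}$, and any discontinuity of $v_{\hat\pi}$ or of its derivative at $0$ can be dealt with via a standard mollification-and-limit argument applied to the left and right pieces separately.

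Next I would apply the appropriate change-of-variables formula (the classical It\^o formula in the unbounded variation case, its $C^1$ finite-variation analogue in the bounded variation case) to $e^{-qt} v_{\hat\pi}(U_t^\pi)$, evaluated at the stopping time $\tau_n := \sigma^\pi \wedge T_n$, where $T_n := \inf\{t : U_t^\pi > n\}$ localizes the local martingale part. Using the decomposition $U_t^\pi = Y_t - A_t^\pi - S_t^\pi$ and separating continuous and jump parts of $S^\pi$, the HJB inequality \eqref{HJB-inequality} applied with $r = a_s^\pi \in [0, \delta]$ bounds the drift integrand by $-a_s^\pi$, while $v'_{\hat\pi} \geq \beta$ on $(0, \infty)$---together with the admissibility constraint $\Delta S_t^\pi \leq U_{t-}^\pi + \Delta Y_t$ that prevents $S^\pi$-jumps from taking $U^\pi$ below zero---bounds both the continuous and jump contributions of $S^\pi$ by $-\beta\, \diff S_s^\pi$. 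Taking expectations (the localized martingale term has zero mean) and rearranging gives
\begin{align*}
v_{\hat\pi}(x) \geq \E_x \Big[ \int_0^{\tau_n} e^{-qs} a_s^\pi \, \diff s + \beta \int_{[0, \tau_n]} e^{-qs} \, \diff S_s^\pi + e^{-q \tau_n} v_{\hat\pi}(U_{\tau_n}^\pi) \Big].
\end{align*}

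Finally I would let $n \to \infty$. The spectral positivity of $Y$, combined with the admissibility constraint ruling out $S^\pi$-induced ruin, forces $U^\pi$ to reach $(-\infty, 0)$ by a continuous crossing of $0$ with no undershoot, so $v_{\hat\pi}(U_{\sigma^\pi}^\pi) = \rho$ by construction of the extension; the two running-reward integrals are monotone in $n$, so monotone convergence recovers $v_\pi(x)$ on the right-hand side. The main obstacle will be the careful passage to the limit, in two parts: (i) justifying the change-of-variables formula through the potential kink of the extended $v_{\hat\pi}$ at $x = 0$, which is resolved by mollifying near $0$ and using that the pathwise occupation measure of $U^\pi$ at $\{0\}$ is negligible (no $S^\pi$-jump or $Y$-jump can leave a nontrivial singular contribution there); and (ii) controlling the boundary term $\E_x[e^{-q \tau_n} v_{\hat\pi}(U_{\tau_n}^\pi) \mathbf{1}_{\{T_n < \sigma^\pi\}}]$, for which the at-most-linear growth of $v_{\hat\pi}$ (see \eqref{value_over_b}), the exponential discount, and the standing assumption $\E[Y_1] < \infty$ together furnish a dominating bound suitable for dominated convergence.
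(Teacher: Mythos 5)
Your overall architecture (Itô on $e^{-qt}v_{\hat\pi}(U^\pi)$, bound the drift by \eqref{HJB-inequality} with $r=a_s^\pi$, use $v_{\hat\pi}'\geq\beta$ for the $S^\pi$ terms, localize, pass to the limit) matches the paper, but there is a genuine gap at the lower boundary, which is exactly the delicate part of this lemma. The hypotheses only give smoothness of $v_{\hat\pi}$ on $(0,\infty)$ and $\rho=v_{\hat\pi}(0)\leq\lim_{x\downarrow 0}v_{\hat\pi}(x)$; they allow the right limit at $0$ to exceed $\rho$ strictly (so your extension by $\rho$ on $(-\infty,0)$ may be \emph{discontinuous} at $0$, not merely kinked) and they allow $v_{\hat\pi}'$, $v_{\hat\pi}''$ to blow up as $x\downarrow 0$ (as indeed happens for the candidate $v_{a^*,b^*}$ when $\mathbb{W}^{(q)\prime}(0+)=\infty$). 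Your proposed fix, ``mollify near $0$ and use that the occupation measure of $U^\pi$ at $\{0\}$ is negligible,'' does not close this: what matters is not occupation of the single point but (i) that the mollified function need no longer satisfy the variational inequality on a neighborhood of $0$ when the derivatives of $v_{\hat\pi}$ are unbounded there, so your drift bound breaks precisely where the pre-ruin path may spend positive time; (ii) when $\sigma>0$, a kink (or jump) of the extended function at $0$ produces a local-time correction in the Meyer--It\^o formula whose sign is unfavorable in your inequality, and showing it vanishes before ruin is an extra argument you do not make; and (iii) with only an upper localization $T_n=\inf\{t:U^\pi_t>n\}$, the integrands of your martingale term involve $v_{\hat\pi}'$, $v_{\hat\pi}''$ near $0$ and may fail the integrability needed for the zero-mean claim. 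The paper avoids all of this by the two-sided localization $T_n=\inf\{t:U^\pi_t>n\ \text{or}\ U^\pi_t<1/n\}$, applying the change of variables only where $v_{\hat\pi}$ is smooth, and then recovering the terminal payoff through the inequality $\rho\leq\lim_{x\downarrow 0}v_{\hat\pi}(x)$, dominated convergence and $T_n\to\sigma^\pi$, which yield $\liminf_{t,n}\E_x\big(e^{-q(t\wedge T_n)}v_{\hat\pi}(U^\pi_{t\wedge T_n})\big)\geq\rho\,\E(e^{-q\sigma^\pi})$; no extension of $v_{\hat\pi}$ below $0$ and no creeping argument are needed.

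A secondary point: your dominated-convergence bound for the event $\{T_n<\sigma^\pi\}$ leans on the linear growth in \eqref{value_over_b}, but that display describes the specific candidate $v_{a^*,b^*}$, not a generic $v_{\hat\pi}$ satisfying the lemma's hypotheses, so it is not available here. It is also unnecessary: since $v_{\hat\pi}'\geq\beta>0$ forces $v_{\hat\pi}(\infty)=\infty$, for $n$ large the boundary term on $\{U^\pi_{T_n}>n\}$ is nonnegative and can simply be discarded, because only a lower bound is required---this is exactly how the paper handles it.
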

\begin{proof} 
	By the definition of $v$ as a supremum, it follows that $v_{\hat{\pi}}(x)\leq v(x)$ for all $x\geq0$. We write $w:=v_{\hat{\pi}}$ and show that $w(x)\geq v_\pi(x)$ for all $\pi\in\mathcal{A}$ and $x\geq0$. Fix $\pi\in \mathcal{A}$.
	
	Let $(T_n)_{n\in\mathbb{N}}$ be the sequence of stopping times defined by $T_n :=\inf\left\{t>0:{U}^\pi_t>n\text{ or } U^{\pi}_t<\frac{1}{n} \right\}$. 
	Since ${U}^\pi$ is a semi-martingale and $w$ is sufficiently smooth on $(0, \infty)$ by assumption, we can use the change of variables/It\^o's formula \citep*[see Theorems II.31 and II.32 of][]{Pro05}
to the stopped process $(\mathrm{e}^{-q(t\wedge T_n)}w({U}^\pi_{t\wedge T_n}); t \geq 0)$ to deduce under $\mathbb{P}_x$ that
	\begin{equation*}
		\label{impulse_verif_1}
		\begin{split}
			\mathrm{e}^{-q(t\wedge T_n)}w({U}^\pi_{t\wedge T_n})-w(x)
			= & -\int_{0}^{t\wedge T_n}\mathrm{e}^{-qs} q w({U}^\pi_{s-}) \mathrm{d}s
			+\int_{[0, t\wedge T_n]}\mathrm{e}^{-qs}w'({U}^\pi_{s-}) \mathrm{d}  ( Y_s- {A}^\pi_s  )  \\
			&-\int_0^{t\wedge T_n}\mathrm{e}^{-qs}w'({U}^\pi_{s-}) \mathrm{d} S^{\pi,c}_s - \sum_{0 \leq s\leq t\wedge T_n}\mathrm{e}^{-qs}\Delta w({U}^\pi_{s-}+\Delta S^\pi_s) \\+ &\frac{\sigma^2}{2}\int_0^{t\wedge T_n}\mathrm{e}^{-qs}w''({U}^\pi_{s-})\mathrm{d}s 
			 + \sum_{0 \leq s\leq t\wedge T_n}\mathrm{e}^{-qs}[\Delta w({U}^\pi_{s-}+\Delta Y_s)-w'({U}^\pi_{s-})  \Delta Y_s  ],
		\end{split}
	\end{equation*}
	where we use the following notation: $\Delta w(\zeta_s):=w(\zeta_s)-w(\zeta_{s-})$ and $\zeta^c$ for the continuous part for any process $\zeta$.
	Rewriting the above equation leads to 
	\begin{multline*}
			\mathrm{e}^{-q(t\wedge T_n)}w({U}^\pi_{t\wedge T_n})  -w(x)
			=  \int_{0}^{t\wedge T_n}\mathrm{e}^{-qs}   (\mathcal{L}_Y-q)w({U}^\pi_{s-})   \mathrm{d}s \\
			-\int_{0}^{t\wedge T_n}\mathrm{e}^{-qs}w'({U}^\pi_{s-})\mathrm{d}{A}^\pi_s  -\int_0^{t\wedge T_n}\mathrm{e}^{-qs}w'({U}^\pi_{s-}) \mathrm{d} S^{\pi,c}_s - \sum_{0 \leq s\leq t\wedge T_n}\mathrm{e}^{-qs}\Delta w({U}^\pi_{s-}+\Delta S^\pi_s)  + M_{t \wedge T_n}
	\end{multline*}
	where 
	\begin{align}\label{def_M_martingale}
		\begin{split}
			M_t &:= \int_0^t \sigma  \mathrm{e}^{-qs} w'(U_{s-}^{\pi}) \diff B_s +\lim_{\varepsilon\downarrow 0}\int_{[0,t]} \int_{(\varepsilon,1)}  \mathrm{e}^{-qs}w'(U_{s-}^{\pi})y (N(\diff s\times \diff y)-\Pi(\diff y) \diff s)\\
			&+\int_{[0,t]} \int_{(0,\infty)} \mathrm{e}^{-qs}(w(U_{s-}^\pi+y)-w(U_{s-}^\pi)-w'(U_{s-}^\pi)y\mathbf{1}_{\{y\in (0, 1)\}})(N(\diff s\times \diff y)-\Pi(\diff y) \diff s), \quad t \geq 0, 
		\end{split}
	\end{align}
	where $( B_s; s \geq 0 )$ is a standard Brownian motion and $N$ is a Poisson random measure in   the measure space  $([0,\infty)\times (0, \infty),\B [0,\infty)\times \B (0, \infty), \diff s \times \Pi( \diff x))$.
	\par Hence we derive that
	\begin{equation*}
		\begin{split}
			w(x) = &
			-\int_{0}^{t\wedge T_n}\mathrm{e}^{-qs}  \left[ (\mathcal{L}_Y-q)w({U}^\pi_{s-})-a^\pi_sw'({U}^\pi_{s-})+ a^\pi_s \right]  \mathrm{d}s
			\\
			&+\int_0^{t\wedge T_n}\mathrm{e}^{-qs}w'({U}^\pi_{s-}) \mathrm{d} S^{\pi,c}_s + \sum_{0 \leq s\leq t\wedge T_n}\mathrm{e}^{-qs}\Delta w({U}^\pi_{s-}+\Delta S^\pi_s) \\
			& + \int_{0}^{t\wedge T_n}\mathrm{e}^{-qs} a^\pi_s \mathrm{d}s - M_{t\wedge T_n} + \mathrm{e}^{-q(t\wedge T_n)}w({U}^\pi_{t\wedge T_n}).
		\end{split}
	\end{equation*}
	Using  the assumption \eqref{HJB-inequality} and $a_s^\pi \in [0, \delta]$ a.s.\ for all $s \geq 0$, we have
	\begin{equation} \label{w_lower}
	\begin{split}
	w(x) \geq &
	\int_{0}^{t\wedge T_n}\mathrm{e}^{-qs} a^\pi_s \mathrm{d}s +\beta\int_{[0, t\wedge T_n]}\mathrm{e}^{-qs}\mathrm{d} S^\pi_s - M_{t\wedge T_n} + \mathrm{e}^{-q(t\wedge T_n)}w({U}^\pi_{t\wedge T_n}).
	\end{split}
	\end{equation} 
		In addition by the compensation formula \citep*[see Corollary 4.6 of][]{Kyp06}, $(M_{t \wedge T_n}:t\geq0 )$ is a zero-mean $\mathbb{P}_x$-martingale.  
	On the other hand,
	\begin{align*} 
	\E_x \big( \mathrm{e}^{-q(t\wedge T_n)}w({U}^\pi_{t\wedge T_n}) \big) = \E_x \big( 1_{\{ t < T_n\}}\mathrm{e}^{-q t}w({U}^\pi_{t}) \big) + \E_x \big( 1_{\{ t > T_n\}} \mathrm{e}^{-qT_n}w({U}^\pi_{T_n}) \big),
	\end{align*}
	where the former vanishes as $t \rightarrow \infty$ because $w$ is bounded on $[0, n]$. 
	For the latter for sufficiently large $n$ such that $w(n) > 0$ (note that $w(\infty) = \infty$), we have, because $\rho = w(0) \leq \lim_{x \downarrow 0} w(x)$, $x \geq 0$, by assumption,
	\begin{align*}
	\E_x \big( 1_{\{ t > T_n\}} \mathrm{e}^{-qT_n}w({U}^\pi_{T_n}) \big) &= \E_x \big( 1_{\{ t > T_n, U^\pi_{T_n} \leq 1 / n \}} \mathrm{e}^{-qT_n}w( U^\pi_{T_n}) \big) + \E_x \big( 1_{\{ t > T_n, U^\pi_{T_n} >  n  \}} \mathrm{e}^{-qT_n}w({U}^\pi_{T_n}) \big) \\
	&\geq \E_x \big( 1_{\{ t > T_n, U^\pi_{T_n} \leq  1 / n \}} \mathrm{e}^{-qT_n} w({U}^\pi_{T_n}) \big).	
\end{align*}
By dominated convergence and because $T_n$ converges to $\sigma^{\pi}$ a.s.,
we have
\begin{align*}
\lim_{t, n \rightarrow \infty}\E_x \big( 1_{\{ t > T_n, U^\pi_{T_n}  \leq  1 / n \}} \mathrm{e}^{-qT_n} w({U}^\pi_{T_n})\big) 
\geq\rho \E (e^{-q \sigma^\pi}).
\end{align*}
In sum,
	\begin{align} 
	\liminf_{t,n \rightarrow \infty}\E_x \big( \mathrm{e}^{-q(t\wedge T_n)}w({U}^\pi_{t\wedge T_n}) \big) \geq \rho \E (e^{-q \sigma^\pi}). \label{rho_lower_bound}
	\end{align}
	
	\par Now  taking expectations  in \eqref{w_lower} and
	letting $t$ and $n$ go to infinity ($T_n\nearrow\infty$ $\mathbb{P}_x$-a.s.), the monotone convergence theorem and \eqref{rho_lower_bound} give 
	\begin{equation*}
		w(x) \geq \mathbb{E}_x \left( \int_{0}^{\sigma^\pi}\mathrm{e}^{-qs} a^\pi_s \mathrm{d}s+\beta\int_{[0,\sigma^\pi]}\mathrm{e}^{-qs}\mathrm{d} S^\pi_s +\rho e^{-q \sigma^\pi} \right) =v_\pi(x).
	\end{equation*}
	This completes the proof.
	\end{proof}

\subsection{Proof of optimality}
We shall now show that the function $v_{a^*, b^*}$ satisfies all the requirements given in Lemma \ref{verificationlemma}.  
We shall first see that the function $v_{a^*, b^*}$ is concave with its slope $v_{a^*, b^*}'(a^*) = 1$ and $v_{a^*, b^*}'(b^*) = \beta$, respectively, 
when $a^* > 0$ and $b^* > 0$.

\begin{lemma}\label{ver_1} The function $v_{a^*, b^*}$ is concave and the following holds:
\begin{enumerate}
	\item  For $x>a^*$, we have $\beta \leq v_{a^*,b^*}'(x)\leq 1$;
	\item  For $0 < x<a^*$, we have $v_{a^*,b^*}'(x)\geq 1 > \beta$.
\end{enumerate}
\end{lemma}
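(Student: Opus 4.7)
The plan is to work piecewise on the three regions $(0, a^*)$, $(a^*, b^*)$, and $(b^*, \infty)$, using the explicit formulas \eqref{v_a_b_formula}--\eqref{v_0_0}. Case (i) of Lemma \ref{lemma_existence} is trivial: \eqref{v_0_0} gives $v_{0,0}'(x) = \beta$ on $(0,\infty)$, so (1) holds and (2) is vacuous. For the remainder of the argument I focus on cases (ii-1) and (ii-2).

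On $(b^*,\infty)$, \eqref{value_over_b} gives $v_{a^*,b^*}'(x) = \beta$. On $(a^*,b^*)$, differentiating \eqref{value_bw_a_b} and using $R^{(q)\prime} = Z^{(q)}$ yields
\begin{equation*}
v_{a^*,b^*}'(x) = \beta Z^{(q)}(b^*-x).
\end{equation*}
Since $Z^{(q)} \geq 1$ and is nondecreasing, $v_{a^*,b^*}'(x) \geq \beta$ is immediate. For the upper bound $v_{a^*,b^*}'(x) \leq 1$, I use that in case (ii-2) condition $\mathbf{C}_a'$ gives $Z^{(q)}(b^*-a^*) = \beta^{-1}$, while in case (ii-1) Lemma \ref{lemma_existence}(ii-1) gives $Z^{(q)}(b^*) \leq \beta^{-1}$; in both cases $Z^{(q)}(b^*-x) \leq \beta^{-1}$ on $(a^*,b^*)$.

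The core of the argument is the inner region $0 < x < a^*$ in case (ii-2). Here I start from \eqref{v_a_b_derivative}, use $\Gamma(a^*,b^*) = 0$ to drop the first term, and integrate by parts in the integral hidden inside $\tilde{r}^{(q)\prime}_{b^*-a^*}(b^*-x)$, exploiting $\tfrac{d}{du}Z^{(q)}(b^*-u) = -qW^{(q)}(b^*-u)$. The boundary term at $u = a^*$ produces a factor $Z^{(q)}(b^*-a^*) = \beta^{-1}$, which cancels the $-\delta \mathbb{W}^{(q)}(a^*-x)$ contribution; the boundary term at $u = x$ cancels the piece involving $\mathbb{W}^{(q)}(0)$. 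The remainder collapses to
\begin{equation*}
v_{a^*,b^*}'(x) = \beta\Big(Z^{(q)}(b^*-x) + q\delta\int_x^{a^*}\mathbb{W}^{(q)}(u-x)W^{(q)}(b^*-u)\,du\Big) = \beta\, r^{(q)}_{b^*-a^*}(b^*-x),
\end{equation*}
after recognizing the integral with a change of variable in the definition of $r^{(q)}_{b^*-a^*}$. Since $z \mapsto r^{(q)}_{b^*-a^*}(z)$ is strictly increasing on $[b^*-a^*,\infty)$ with value $Z^{(q)}(b^*-a^*) = \beta^{-1}$ at $z = b^*-a^*$, it follows that for $x < a^*$, $r^{(q)}_{b^*-a^*}(b^*-x) > \beta^{-1}$, giving $v_{a^*,b^*}'(x) > 1$.

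Finally, concavity follows by piecing together monotonicity of $v_{a^*,b^*}'$: constantly $\beta$ on $(b^*,\infty)$, equal to $\beta Z^{(q)}(b^*-x)$ (decreasing in $x$) on $(a^*,b^*)$, and equal to $\beta r^{(q)}_{b^*-a^*}(b^*-x)$ (decreasing in $x$) on $(0,a^*)$. Continuity at $b^*$ and at $a^*$ comes from the smooth-fit conditions $\mathbf{C}_b$ and $\mathbf{C}_a$ (Lemma \ref{lemma_smooth_fit}), together with the matching values $\beta Z^{(q)}(0) = \beta$ at $b^*$ and $\beta r^{(q)}_{b^*-a^*}(b^*-a^*) = 1 = \beta Z^{(q)}(b^*-a^*)$ at $a^*$. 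Hence $v_{a^*,b^*}'$ is continuous and nonincreasing on $(0,\infty)$, so $v_{a^*,b^*}$ is concave. The main obstacle is the integration-by-parts simplification that turns the $x<a^*$ formula into $\beta r^{(q)}_{b^*-a^*}(b^*-x)$; once that identity is in hand, the bounds on $v_{a^*,b^*}'$ and concavity are immediate consequences of monotonicity of the scale functions.
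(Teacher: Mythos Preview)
Your argument is correct and in fact a bit slicker than the paper's. The paper proceeds by computing $v_{a^*,b^*}''$ explicitly on $(a^*,b^*)$ and on $(0,a^*)$ (equations \eqref{sdvf_2} and \eqref{sdvf_1}) and showing both are nonpositive, then uses concavity together with the endpoint values $v_{a^*,b^*}'(a^*)=1$ and $v_{a^*,b^*}'(b^*)=\beta$ to obtain the bounds (1) and (2). You instead derive the closed form $v_{a^*,b^*}'(x)=\beta Z^{(q)}(b^*-x)$ on $(a^*,b^*)$ and, via the integration-by-parts step, the identity $v_{a^*,b^*}'(x)=\beta\, r^{(q)}_{b^*-a^*}(b^*-x)$ on $(0,a^*)$; monotonicity of $Z^{(q)}$ and of $r^{(q)}_{b^*-a^*}$ then gives concavity and the derivative bounds simultaneously, without ever needing the second-derivative formula \eqref{r_second_derivative}. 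The two approaches rest on the same smooth-fit inputs $\Gamma(a^*,b^*)=0$ and $Z^{(q)}(b^*-a^*)=\beta^{-1}$; your route has the advantage of avoiding $\mathbb{W}^{(q)\prime}$ in the final expressions and of producing the compact identity for $v_{a^*,b^*}'$ as a byproduct, while the paper's route is slightly more mechanical but yields the explicit second derivative, which is in any case not used elsewhere.
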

\begin{proof}

The case $b^* = 0$ holds trivially in view of 
\eqref{v_0_0}. Hence, we focus on the case $b^* > 0$. 
By \eqref{v_a_b_derivative} and \eqref{scsp_2},
	\begin{align*}
v_{a^*,b^*}'(x)&=  - \delta \mathbb{W}^{(q)}(a^*-x) + \beta \tilde{r}^{(q)\prime}_{b^*-a^*}(b^*-x), \quad x > 0,
\end{align*}
and, differentiating it,
	\begin{align*}
v_{a^*,b^*}''(x)&=   \delta \mathbb{W}^{(q) \prime}(a^*-x) - \beta \tilde{r}^{(q)\prime \prime}_{b^*-a^*}(b^*-x), \quad x \in (0, \infty) \backslash \{a^*, b^*\}.
\end{align*}
In particular, for $x>a^*$, by \eqref{r_second_derivative},
\begin{equation}\label{sdvf_2}
v_{a^*,b^*}''(x)= -q\beta W^{(q)}(b^*-x) \leq 0,
\end{equation}
and, if $a^* > 0$ (which means $\mathbf{C}_a'$ is satisfied) for $x<a^*$, by \eqref{r_second_derivative} and \eqref{oc_2}, 
\begin{align}\label{sdvf_1}
\begin{split}
v_{a^*,b^*}''(x)
&=-\beta\Big(qW^{(q)}(b^*-x)+\delta\int_x^{a^*}\mathbb{W}^{(q)\prime}(u-x)Z^{(q)\prime}(b^*-u) \diff u\Big)\leq 0.
\end{split}
\end{align}

This shows that $v_{a^*, b^*}$ is concave on $(0, b^*)$.  This can be extended to $(0, \infty)$ by
the continuous differentiability at $b^*$ as in Lemma \ref{lemma_smooth_fit} and $v_{a^*,b^*}'(x)=\beta$ for $x > b^*$.

To show (1) and (2), by the obtained concavity and $v_{a^*,b^*}(b^*) = \beta$, it is left to show for the case $a^* > 0$ that $v_{a^*,b^*}'(a^*)=1$ and for the case $a^* = 0$ that $v_{0,b^*}'(0+) \leq 1$.  Indeed, for the former, by (\ref{oc_2}), $v_{a^*,b^*}'(a^*)=\beta Z^{(q)}(b^*-a^*)=1$.
For the latter, by Lemma \ref{lemma_existence} (ii-1), $v_{0,b^*}'(0+) =\beta Z^{(q)}(b^*) \leq 1$.
\end{proof}

We shall now compute the generator terms appearing in the variational inequalities. Toward this end, we use the following lemma, which is a direct consequence of the well-known results that can be shown by the martingale arguments.
Similarly to $\mathcal{L}_Y$ as in \eqref{generator_Y}, let  $\mathcal{L}_X$ be the operator for $X$ acting on sufficiently smooth function $g$, defined by
\begin{equation*}
\begin{split}
\mathcal{L}_X g(x)&:= \mathcal{L}_Y g(x) - \delta g'(x) = -\gamma_X g'(x)+\frac{\sigma^2}{2}g''(x) +\int_{(0,\infty)}[g(x+z)-g(x) -g'(x)z\mathbf{1}_{\{0<z<1\}}]\Pi(\mathrm{d}z).
\end{split}
\end{equation*}

\begin{lemma} \label{remark_harmonic} 
(i) For $x < a^*$,
$(\mathcal{L}_Y -q) \mathbb{Z}^{(q)}(a^*-x) = 0$.  (ii) For $x < b^*$, $ (\mathcal{L}_X-q)R^{(q)} (b^*-x)= 0$. 
(iii) For $x < \bar{b} \leq b^*$,
$(\mathcal{L}_Y-q) \big( \int_x^{\bar{b}}\mathbb{W}^{(q)}(u-x)Z^{(q)}(b^*-u)\diff u \big) = Z^{(q)} (b^*-x)$.
\end{lemma}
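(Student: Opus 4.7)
The three identities are standard $q$-harmonicity facts for scale functions of spectrally one-sided Lévy processes, and my plan is to derive each from the martingale characterisation of scale functions combined with Itô's/Dynkin's formula.

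For \emph{part (i)}, the process $\tilde{L}_t := a^* - Y_t$ is a spectrally negative Lévy process starting at $a^*-x>0$ and $\mathbb{Z}^{(q)}$ is the classical $Z$-scale function of $-Y$. By the well-known martingale property (see e.g.\ Theorem 8.1 of \citet*{Kyp06}), the process $e^{-q(t\wedge \tau)}\mathbb{Z}^{(q)}(\tilde{L}_{t\wedge \tau})$ with $\tau := \inf\{t: \tilde{L}_t<0\}$ is a $\p_x$-martingale. Applying Itô's formula to $e^{-qt}\mathbb{Z}^{(q)}(a^*-Y_t)$ on $\{t<\tau\}$, and identifying the finite-variation (drift) component with zero, yields $(\mathcal{L}_Y-q)\mathbb{Z}^{(q)}(a^*-\cdot)\equiv 0$ on $(-\infty,a^*)$. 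For \emph{part (ii)}, I would apply the analogous argument with $L_t := b^*-X_t$ (a spectrally negative Lévy process obtained from $-X$) together with the constant-shift identity $R^{(q)}(y) = \overline{Z}^{(q)}(y) + \psi_X'(0+)/q$. The additive constant $\psi_X'(0+)/q$ is exactly what cancels the inhomogeneity $(\mathcal{L}_L-q)\overline{Z}^{(q)}(y) = -\psi_X'(0+)$ arising from integrating the harmonicity of $Z^{(q)}$; this gives $(\mathcal{L}_L-q)R^{(q)}(y)=0$ for $y>0$. Translating back via the change of variable $\mathcal{L}_X f(x) = (\mathcal{L}_L g)(b^*-x)$ for $f(x)=g(b^*-x)$ gives the result.

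For \emph{part (iii)}, my plan is to use a Feynman-Kac/Green-function argument. Writing $h(x) := \int_x^{\bar{b}}\mathbb{W}^{(q)}(u-x)Z^{(q)}(b^*-u)\diff u$, the function $y\mapsto \mathbb{W}^{(q)}(y-x)\mathbf{1}_{\{y>x\}}$ is (the relevant part of) the $q$-resolvent density of $Y$ killed at $\tau_{\bar{b}}^+ := \inf\{t:Y_t\ge \bar{b}\}$. This gives a probabilistic representation of $h(x)$ as (up to a boundary correction) the expected discounted integral $\E_x\bigl[\int_0^{\tau_{\bar{b}}^+} e^{-qs}Z^{(q)}(b^*-Y_s)\diff s\bigr]$, from which Dynkin's formula yields the stated identity. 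Alternatively, one can verify it directly by applying $(\mathcal{L}_Y-q)$ termwise and interchanging the operator with the integral in $u$ via Fubini, using part (i) applied to $\mathbb{W}^{(q)}(\cdot-x)$; the right-hand side $Z^{(q)}(b^*-x)$ then arises from the boundary contribution at $u=x$, where $\mathbb{W}^{(q)}(0+)$ couples with the local behaviour of $Y$.

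The main obstacle will be part (iii). The scale function $\mathbb{W}^{(q)}$ has limited regularity at zero---it may take a nonzero value in the bounded variation case and its derivative may blow up in some unbounded variation cases, cf.\ \eqref{eq:Wqp0}---so the Fubini exchange and the precise accounting of the boundary term at $u=x$ must be handled carefully. It is precisely this boundary contribution that produces the forcing term $Z^{(q)}(b^*-x)$ rather than zero, and getting its sign and magnitude correct is the central technical point.
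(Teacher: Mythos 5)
Your overall strategy is the right one, and in fact it is essentially what the paper itself relies on: the paper's proof is a two-line citation, invoking the martingale/harmonicity arguments in the proof of Theorem 2.1 of \citet*{BaKyYa12} for (i)--(ii) and the killed-resolvent computation in the proof of Lemma 4.5 of \citet*{EgYa13} for (iii). Your part (i) is fine, and your primary route for (iii) is sound: writing $h(x)=\int_x^{\bar b}\mathbb{W}^{(q)}(u-x)Z^{(q)}(b^*-u)\diff u = \mathbb{W}^{(q)}(\bar b-x)\,K-\E_x\int_0^{T^+_{\bar b}}e^{-qs}Z^{(q)}(b^*-Y_s)\diff s$ with $T^+_{\bar b}:=\inf\{t:Y_t>\bar b\}$ and $K:=\int_{-\infty}^{\bar b}e^{-\varphi(q)(\bar b-y)}Z^{(q)}(b^*-y)\diff y$ (finite because $Z^{(q)}$ grows at rate $\Phi(q)<\varphi(q)$), harmonicity of $\mathbb{W}^{(q)}(\bar b-\cdot)$ kills the first term and the resolvent property gives the forcing $Z^{(q)}(b^*-x)$. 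Do note, however, that your ``alternative'' heuristic for (iii) is not right: the forcing term does not come from a boundary contribution at $u=x$ in which $\mathbb{W}^{(q)}(0+)$ ``couples with the local behaviour''---in the unbounded variation case $\mathbb{W}^{(q)}(0)=0$ by \eqref{eq:Wqp0}, yet the identity still holds; the inhomogeneity is produced by the resolvent (Green-kernel) structure of the convolution, not by a boundary term.

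The genuine error is the sign in part (ii). On $(0,\infty)$ one has $(\mathcal{L}_{-X}-q)\overline{Z}^{(q)}(y)=+\psi_X'(0+)$, not $-\psi_X'(0+)$; since $(\mathcal{L}_{-X}-q)$ applied to the constant $\psi_X'(0+)/q$ gives $-\psi_X'(0+)$, it is precisely the $+$ sign that makes $R^{(q)}=\overline{Z}^{(q)}+\psi_X'(0+)/q$ harmonic, whereas with your stated sign the sum would be $-2\psi_X'(0+)\neq0$ (and to cancel an inhomogeneity $-\psi_X'(0+)$ you would have to \emph{subtract} $\psi_X'(0+)/q$, contradicting the definition of $R^{(q)}$). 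A quick sanity check: for $-X$ a Brownian motion with drift $\mu=\psi_X'(0+)$, explicit scale functions give $\overline{Z}^{(q)}(y)+\mu/q=\frac{1}{2\Delta}\bigl(\tfrac{\zeta}{\Phi(q)}e^{\Phi(q)y}-\tfrac{\Phi(q)}{\zeta}e^{-\zeta y}\bigr)$ with $\psi_X(-\zeta)=q$ and $\Delta=\sqrt{\mu^2+2q\sigma^2}/\sigma^2$-type normalisation, a combination of $q$-harmonic exponentials, so $(\mathcal{L}_{-X}-q)\overline{Z}^{(q)}=+\mu$; equivalently, your own ``integrate the harmonicity of $Z^{(q)}$'' mechanism yields $\mu(Z^{(q)}(y)-1)+\tfrac{\sigma^2}{2}q(W^{(q)}(y)-W^{(q)}(0))-q\overline{Z}^{(q)}(y)=0$, i.e.\ the inhomogeneity $+\psi_X'(0+)$ (with the jump part contributing through the linear, non-harmonic continuation of $\overline{Z}^{(q)}$ on the negative half-line). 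The slip is easily repaired, but as written the key cancellation in (ii) fails, so fix the sign before relying on it in Lemma \ref{ver_2}.
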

\begin{proof} (i) and (ii) hold immediately by, for instance, the proof of Theorem 2.1 in \citet*{BaKyYa12}.  (iii) holds by applying the result in the proof of Lemma 4.5 in \citet*{EgYa13}.
\end{proof}

Using this, we have the following lemma.
\begin{lemma}\label{ver_2} 
	\begin{itemize}
		\item[(i)] Suppose $a^* > 0$. For $0 <  x<a^*$, we have $(\mathcal{L}_Y-q)v_{a^*,b^*}(x)=0$.
		\item [(ii)] Suppose $b^* > 0$. For $a^*<x<b^*$, we have $(\mathcal{L}_X-q)v_{a^*,b^*}(x)+\delta=0$.
		\item[(iii)] For $x>b^*$, $(\mathcal{L}_X-q)v_{a^*,b^*}(x)+ \delta\leq 0$.
	\end{itemize}
\end{lemma}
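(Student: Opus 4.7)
The plan is a direct verification of the three statements using the closed-form for $v_{a^*, b^*}$ together with Lemma \ref{remark_harmonic}. A useful preliminary observation is that, when $b^* > 0$, the condition $\mathbf{C}_b$ reduces the expression in Lemma \ref{vf_sp} to
\begin{align*}
v_{a^*, b^*}(x) = \frac{\delta}{q}\, \mathbb{Z}^{(q)}(a^* - x) - \beta\, \tilde{r}^{(q)}_{b^* - a^*}(b^* - x),
\end{align*}
and a brief check (using $\mathbb{Z}^{(q)}(y) = 1$ and $\overline{Z}^{(q)}(y) = y$ for $y \leq 0$, together with $\mathbb{W}^{(q)} \equiv 0$ on $(-\infty, 0)$) shows that this single formula agrees with \eqref{value_bw_a_b} on $[a^*, b^*]$ and with \eqref{value_over_b} on $[b^*, \infty)$. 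Thus the jump terms of $\mathcal{L}_Y v_{a^*, b^*}$ and $\mathcal{L}_X v_{a^*, b^*}$ can be evaluated legitimately from this one expression even though they involve values of $v_{a^*, b^*}$ in all three regions.

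For part (i), for $x \in (0, a^*)$, I would use the change of variables $u = b^* - y$ to write
\begin{align*}
\tilde{r}^{(q)}_{b^* - a^*}(b^* - x) = R^{(q)}(b^* - x) + \delta \int_x^{a^*} \mathbb{W}^{(q)}(u - x)\, Z^{(q)}(b^* - u)\, \diff u,
\end{align*}
and apply $(\mathcal{L}_Y - q)$ term by term. Lemma \ref{remark_harmonic}(i) immediately gives $(\mathcal{L}_Y - q)\mathbb{Z}^{(q)}(a^* - x) = 0$. Using the decomposition $\mathcal{L}_Y = \mathcal{L}_X + \delta\, \partial_x$ together with Lemma \ref{remark_harmonic}(ii) yields $(\mathcal{L}_Y - q) R^{(q)}(b^* - x) = -\delta Z^{(q)}(b^* - x)$; and Lemma \ref{remark_harmonic}(iii) with $\bar{b} = a^*$ gives $(\mathcal{L}_Y - q)$ of the integral equal to $Z^{(q)}(b^* - x)$. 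Multiplying by the coefficient $-\beta\delta$ and adding, the two $Z^{(q)}(b^* - x)$ contributions cancel, yielding $(\mathcal{L}_Y - q) v_{a^*, b^*}(x) = 0$. For part (ii), in $(a^*, b^*)$ the closed-form further simplifies to $v_{a^*, b^*}(x) = \delta/q - \beta R^{(q)}(b^* - x)$, and a direct computation gives $(\mathcal{L}_X - q)(\delta/q) = -\delta$ while $(\mathcal{L}_X - q) R^{(q)}(b^* - x) = 0$ by Lemma \ref{remark_harmonic}(ii); adding $\delta$ finishes (ii).

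For part (iii), $x > b^*$, the function $v_{a^*, b^*}$ is affine with $v_{a^*, b^*}' \equiv \beta$ and $v_{a^*, b^*}'' \equiv 0$; and because $x + z > b^*$ for every $z > 0$, the jump integral reduces to $\beta \int_{[1, \infty)} z\, \Pi(\diff z)$. Combining this with the identity $-\gamma_X + \int_{[1, \infty)} z\, \Pi(\diff z) = -\psi_X'(0+)$, read off from the L\'evy--Khintchine formula \eqref{Laplace_Y} for $X$, gives $\mathcal{L}_X v_{a^*, b^*}(x) = -\beta \psi_X'(0+)$, and substituting the affine form of $v_{a^*, b^*}$ yields $(\mathcal{L}_X - q) v_{a^*, b^*}(x) + \delta = -q\beta(x - b^*) \leq 0$ when $b^* > 0$; in the degenerate case $b^* = 0$ with $v_{0,0}(x) = \beta x + \rho$, the same affine computation gives $(\mathcal{L}_X - q) v_{0,0}(x) + \delta = \underline{\Gamma}(0) - q\beta x \leq 0$ thanks to Lemma \ref{lemma_existence}(i). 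The main obstacle is really the bookkeeping in part (i): one has to track the different domains of validity of the three parts of Lemma \ref{remark_harmonic}, switch carefully between $\mathcal{L}_Y$ and $\mathcal{L}_X$, and ensure that the single closed-form for $v_{a^*, b^*}$ extends globally so that the jump terms in the generators are being evaluated on the correct function.
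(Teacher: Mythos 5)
Your proposal is correct and follows essentially the same route as the paper: it rewrites $v_{a^*,b^*}$ via \eqref{v_a_b_formula} (noting it is valid globally), applies Lemma \ref{remark_harmonic} (i)--(iii) together with $\mathcal{L}_Y = \mathcal{L}_X + \delta\,\partial_x$ so that the $Z^{(q)}(b^*-x)$ terms cancel in part (i), uses \eqref{value_bw_a_b} with Lemma \ref{remark_harmonic}(ii) for part (ii), and treats the affine region directly (including the $b^*=0$ case via Lemma \ref{lemma_existence}(i)) for part (iii). The only difference is that you spell out the generator computation $\mathcal{L}_X v_{a^*,b^*}(x) = -\beta\psi_X'(0+)$ on the affine region, which the paper uses implicitly.
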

\begin{proof}

\par (i) 
We first rewrite \eqref{v_a_b_formula} as
\begin{align*}
v_{a^*,b^*}(x)=\frac {\delta \mathbb{Z}^{(q)}(a^*-x)} q -\beta \Big(R^{(q)} (b^*-x)+\delta\int_{x}^{a^*}\mathbb{W}^{(q)}(u-x)Z^{(q)}(b^*-u)\ud u\Big).
\end{align*}
We notice that by Lemma \ref{remark_harmonic} (ii)
\begin{align*}
	&(\mathcal{L}_Y-q) R^{(q)} (b^*-x)  = - \delta Z^{(q)}(b^*-x). 
\end{align*}
Recall also Lemma \ref{remark_harmonic} (i) and (iii).
Putting the pieces together, we have the result.

(ii) In view of the form \eqref{value_bw_a_b},
 Lemma \ref{remark_harmonic} (ii) shows the result.
\par (iii) When $b^* > 0$, by \eqref{value_over_b}, for $x > b^*$,
\begin{align*}
	(\mathcal{L}_X-q) v_{a^*,b^*}(x)
	&=-\beta \psi'_X(0+)-\beta q(x-b^*)-\delta+\beta \psi'_X(0+)\\
	&=-\delta -\beta q(x-b^*) \leq -  \delta.
\end{align*}

When $b^* = 0$, by Lemma \ref{lemma_existence} (i), $\delta -q \rho  -\beta  \psi'_X(0+) \leq 0$ and hence, for $x > 0$,
\begin{align*}
	(\mathcal{L}_X-q)v_{0,0}(x)&=-\beta \psi'_X(0+)- (\beta qx +  q \rho )\leq - \delta.
\end{align*}

\end{proof}

	Using the above lemmas, we now confirm that $v_{a^*,b^*}$ satisfies the variational inequalities \eqref{HJB-inequality}. First, proceeding like in the proof of Lemma 6  in \citet*{KyLoPe12} \citep*[see also the proof of Lemma 4.3 of][]{HePeYa16},  Lemmas \ref{ver_1} and \ref{ver_2} imply the first item of \eqref{HJB-inequality}.  The second item of \eqref{HJB-inequality} is immediate by Lemma \ref{ver_1}. Finally, \eqref{limit_at_zero} guarantees the last requirement in the lemma. Therefore we have our main result as follows.
	\begin{theorem}  \label{theorem_dividend} The two-layer $(a^*, b^*)$ strategy for $(a^*, b^*)$ selected as in Lemma \ref{lemma_existence} is optimal, and the value function is given by $v(x) = v_{a^*,b^*}(x)$ for all $0 \leq x < \infty$.
	\end{theorem}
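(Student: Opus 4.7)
The plan is to directly invoke the Verification Lemma (Lemma \ref{verificationlemma}) applied to the candidate $w = v_{a^*,b^*}$, so the task reduces to checking: (a) sufficient smoothness on $(0,\infty)$, (b) the HJB inequality $\sup_{0\leq r\leq\delta}((\mathcal{L}_Y-q)w(x)-rw'(x)+r)\leq 0$, (c) the gradient constraint $w'(x)\geq \beta$, and (d) the boundary condition $\rho = w(0) \leq \lim_{x\downarrow 0}w(x)$. All the ingredients are already assembled in Lemmas \ref{lemma_smooth_fit}, \ref{ver_1}, \ref{ver_2}, so the proof is essentially a bookkeeping step.

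First, smoothness on $(0,\infty)$: by construction of $(a^*,b^*)$ in Lemma \ref{lemma_existence}, conditions $\mathbf{C}_b$ and (when $a^* > 0$) $\mathbf{C}_a'$ hold, so Lemma \ref{lemma_smooth_fit} delivers the required $C^1$ (bounded variation case) or $C^2$ (unbounded variation case) regularity at $a^*$ and $b^*$; elsewhere on $(0,\infty)$, smoothness comes from Remark \ref{remark_smoothness}. For (d), \eqref{limit_at_zero} gives exactly $v_{a^*,b^*}(0) = \rho = \lim_{x\downarrow 0}v_{a^*,b^*}(x)$, with equality. The gradient constraint (c) follows from Lemma \ref{ver_1}: on $(0,a^*)$ we have $v_{a^*,b^*}'\geq 1\geq\beta$, on $(a^*,b^*)$ we have $\beta\leq v_{a^*,b^*}'\leq 1$, and on $(b^*,\infty)$ we have $v_{a^*,b^*}'\equiv\beta$ by \eqref{value_over_b}.

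The main step is (b). Rewrite the supremum as
\begin{equation*}
\sup_{0\leq r\leq\delta}\bigl((\mathcal{L}_Y-q)w(x)-rw'(x)+r\bigr)=(\mathcal{L}_Y-q)w(x)+\delta(1-w'(x))^+,
\end{equation*}
so the inequality to verify splits according to whether $w'(x)\geq 1$ or $w'(x)\leq 1$. By Lemma \ref{ver_1}, the former holds on $(0,a^*)$ and the latter on $(a^*,\infty)$. I would then patch together the three regions using Lemma \ref{ver_2}: on $(0,a^*)$, Lemma \ref{ver_2}(i) gives $(\mathcal{L}_Y-q)w=0$, so the supremum equals $0$; on $(a^*,b^*)$, Lemma \ref{ver_2}(ii) gives $(\mathcal{L}_X-q)w+\delta=0$, which together with $\mathcal{L}_Y=\mathcal{L}_X+\delta\partial_x$ and $w'\leq 1$ yields exactly $(\mathcal{L}_Y-q)w+\delta(1-w')=0$; on $(b^*,\infty)$, Lemma \ref{ver_2}(iii) (and $w'=\beta\leq 1$) gives $(\mathcal{L}_Y-q)w+\delta(1-w')=(\mathcal{L}_X-q)w+\delta\leq 0$. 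In the degenerate case $a^*=b^*=0$, only the third region is present and the same argument applies using the second half of the proof of Lemma \ref{ver_2}(iii).

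The main obstacle, such as it is, is keeping the regime switching at $a^*$ and $b^*$ consistent with the correct form of the generator: the equivalence between $(\mathcal{L}_Y-q)w+\delta(1-w')\leq 0$ and $(\mathcal{L}_X-q)w+\delta\leq 0$ depends on the identity $\mathcal{L}_Y g = \mathcal{L}_X g + \delta g'$, and I need to check that my use of Lemma \ref{ver_2}(ii)--(iii), which is stated with $\mathcal{L}_X$, dovetails with the HJB form stated with $\mathcal{L}_Y$. This is routine but worth stating explicitly. Once (a)--(d) are in place, Lemma \ref{verificationlemma} yields $v_{a^*,b^*}(x) = v(x)$ for all $x\geq 0$ and the optimality of $\pi_{a^*,b^*}$.
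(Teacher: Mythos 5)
Your proposal is correct and takes essentially the same route as the paper: it verifies the hypotheses of Lemma \ref{verificationlemma} for $w=v_{a^*,b^*}$ by combining Lemma \ref{lemma_smooth_fit}, Lemma \ref{ver_1}, Lemma \ref{ver_2} and \eqref{limit_at_zero}. The only difference is that you spell out the region-by-region check of the first HJB inequality (writing $\sup_{0\le r\le\delta}\bigl((\mathcal{L}_Y-q)w-rw'+r\bigr)=(\mathcal{L}_Y-q)w+\delta(1-w')^+$ and using $\mathcal{L}_Y g=\mathcal{L}_X g+\delta g'$), a step the paper delegates to the cited proofs in \citet*{KyLoPe12} and \citet*{HePeYa16}; your computation there is sound, including the degenerate cases $a^*=0$ and $a^*=b^*=0$.
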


\subsection{Convergence results} \label{subsection_convergence}

	We conclude this section with some asymptotic results with respect to the parameters $\beta$ and $\delta$.  Here, we show in the limit that the absolutely continuous control $A^{a^*, b^*}$ and/or the singular control $S^{a^*, b^*}$ will vanish or get persistent; i.e. $(b^* - a^*) \rightarrow 0$, $b^* \rightarrow \infty$ or $a^* \rightarrow 0$.  Motivated by these results, we confirm numerically in the next section the convergence of the optimal solutions to those in  \citet*{BaKyYa13} and \citet*{YiWe13}.

Recall as in Lemma \ref{lemma_existence} that $b^* > 0$ if and only if $\underline{\Gamma} (0) = \delta-q \rho   -\beta \psi'_X(0+) > 0$.
 
 \begin{description}
 \item[(i)] \underline{Limits as $\beta \downarrow 0$ (i.e.\ $\beta_S \downarrow 0$):} Note that $\underline{\Gamma} (0)  \xrightarrow{\beta \downarrow 0} \delta-q \rho$. Here, we are comparing the sizes of $\rho$ (the liquidation value) and $\delta/q$ (the value from the refractive barrier; see also Remark \ref{R_values}). When $\beta$ is approaching 0, it does not make sense to keep paying reflective dividends any more. Therefore, a decision between paying only refractive dividends ($b^* \rightarrow \infty$), and liquidating ($b^*=0$) has to be made. We have:
\begin{enumerate}
\item Suppose $q \rho / \delta < 1$. For sufficiently small $\beta > 0$, $\underline{\Gamma} (0)  > 0$ and hence $b^* > 0$ by Lemma \ref{lemma_existence}.
In view of Remark \ref{remark_b_0}, because both $b_0$ and $(Z^{(q)})^{-1} (\beta^{-1})$ go to infinity as $\beta \downarrow 0$, we must have $b^* \xrightarrow{\beta \downarrow 0} \infty$. In words, the value of refractive dividends is greater than that of liquidating.

\item Suppose $q \rho / \delta > 1$. For sufficiently small $\beta > 0$, we must have $\underline{\Gamma} (0) \leq 0$, and hence $a^* = b^* = 0$. In words, the value of liquidating is greater than that of refractive dividends.

\item Suppose $q \rho / \delta = 1$ and $\psi'_X(0+) < 0$. Then, $\underline{\Gamma} (0) > 0$ and hence $b^* > 0$.
In view of Remark \ref{remark_b_0}, because $b_0 = (\overline{Z}^{(q)})^{-1} (-\psi'_X(0+)/q)$ (which does not depend on $\beta$) and  $(Z^{(q)})^{-1} (\beta^{-1}) \xrightarrow{\beta \downarrow 0} \infty$, we must have that $b^* \xrightarrow{\beta \downarrow 0} (\overline{Z}^{(q)})^{-1} (-\psi'_X(0+)/q)$.  This also shows that $a^* \xrightarrow{\beta \downarrow 0} 0$. In this case, both the values are equal. Furthermore, the drift in the refractive layer $\mu_X = -\psi_X^\prime (0+)$ is greater than 0 (positive drift). Therefore, there is value in trying to harvest those positive prospects. It is then optimal to distribute refractive dividends as much as we can, until ruin, i.e. $a^*=0$.

\item Suppose $q \rho / \delta = 1$ and $\psi'_X(0+) \geq 0$.   Then for $\beta > 0$, we must have $\delta-q \rho- \beta \psi'_X(0+) \leq 0$. Hence, by Lemma \ref{lemma_existence}, $a^*=b^*=0$ for all $\beta>0$. This is similar to case [3.] immediately above, except that we are having a non-positive drift. Unsurprisingly, it is then optimal to liquidate the company immediately, i.e. $a^*=b^*=0$.

\end{enumerate}

\item[(ii)] \underline{Limits as $\beta \uparrow 1$:} 
Note that we have $\underline{\Gamma} (0)  \xrightarrow{\beta \uparrow 1} - \psi_Y'(0+) - q \rho$. Here, we are comparing the sizes of $\rho$ (the liquidation value) and $\mu_Y/q := -\psi_Y'(0+)/q$ (the value from the reflective barrier). When $\beta$ is approaching 1, the two-layer dividend strategy converges to the pure reflective (barrier) strategy when transaction costs are the same \citep*[see, e.g.][]{BaKyYa13}. The value of the reflective payment is $\mu_Y/q$ (since $\beta\rightarrow 1$), and here the four cases boil down to two: whether one should have a positive barrier $b^*$ (case [1.] below), or liquidate immediately (cases [2.-4.] below):
\begin{enumerate}
\item  Suppose $\psi_Y'(0+) + q \rho < 0$.   Then, for sufficiently large $\beta < 1$, Lemma \ref{lemma_existence} guarantees that $b^* > 0$.  We have $(Z^{(q)})^{-1} (\beta^{-1}) \xrightarrow{\beta \uparrow 1} 0$ and
$b_0 \xrightarrow{\beta \uparrow 1} (\overline{Z}^{(q)})^{-1} \big( -(q \rho + \psi'_Y(0+)) / q \big) > 0$. Hence for sufficiently large $\beta < 1$ we have $(Z^{(q)})^{-1} (\beta^{-1}) < b_0$ and hence, by Remark \ref{remark_b_0}, $0 < a^* < b^*$ and therefore $Z^{(q)} (b^*-a^*) = \beta^{-1}$.  This implies $b^* - a^* \xrightarrow{\beta \uparrow 1} 0$.

\item Suppose $\psi_Y'(0+) + q \rho > 0$.   Then, for sufficiently large $\beta < 1$, Lemma \ref{lemma_existence} guarantees that $a^* = b^* = 0$.  

\item Suppose $\psi_Y'(0+) + q \rho = 0$ and $\psi_X'(0+) > 0$.   We have $\partial \underline{\Gamma} (0)  / {\partial \beta} = - \psi'_X(0+) < 0$.
 Hence for $\beta < 1$, we must have $\delta-q \rho - \beta \psi'_X(0+) > 0$.  This means $b^* > 0$. Because $b^* \leq b_0$ and
$b_0 \xrightarrow{\beta \uparrow 1} (\overline{Z}^{(q)})^{-1} \Big(- \rho - \frac {\psi'_Y(0+)} q \Big) = 0$, we have $\lim_{\beta \uparrow 1} a^* = \lim_{\beta \uparrow 1} b^* = 0$.
 
\item Suppose $\psi_Y'(0+) + q \rho = 0$ and $\psi_X'(0+) \leq 0$.   We have $\partial \underline{\Gamma} (0)  / {\partial \beta}  = - \psi'_X(0+) \geq 0$.
 Hence for $\beta < 1$, we must have $\delta-q \rho- \beta \psi'_X(0+) \leq 0$, and  by Lemma  \ref{lemma_existence} that $a^*=b^*=0$ for all $\beta<1$.
\end{enumerate}

\item[(iii)] \underline{Limits as $\delta \uparrow \infty$:}  
We have $\underline{\Gamma} (0)    \xrightarrow{\delta \uparrow \infty} \infty$.  Hence, for sufficiently large $\delta$, we must have $b^* > 0$. 
Note that $Z^{(q)}(x) \xrightarrow{\delta \uparrow \infty} 1$ and hence $(Z^{(q)})^{-1} (\beta^{-1}) \xrightarrow{\delta \uparrow \infty} \infty$. 
 Also, 
 \begin{align*}
b_0 := (\overline{Z}^{(q)})^{-1} \Big(\frac {\delta-q \rho} {q \beta}- \frac {\psi'_X(0+)} q \Big) \xrightarrow{\delta \uparrow \infty} \infty.
\end{align*}
Now in view of Remark \ref{remark_b_0}, $b^* \geq b_0 \wedge (Z^{(q)})^{-1} (\beta^{-1})  \xrightarrow{\delta \uparrow \infty} \infty$. This means that when refractive payouts can be higher ($\delta$ is higher), while still being cheaper in terms of transaction costs, then a pure threshold strategy $a^*$ (with $b^*\rightarrow \infty$) becomes optimal.
\end{description}
		
\section{Numerical Examples} \label{numerical_section}

In this section, we focus on the compound Poisson dual model (with diffusion), where $Y$ and $X$ have i.i.d.\ phase-type distributed jumps. In this case, their scale functions have analytical expressions, and hence the optimal strategy and the value function can be computed instantaneously. The class of processes of this type is important because it can approximate any spectrally one-sided \lev process \citep*[see][]{AsAvPi04,EgYa14}.  Full specifications are given in \ref{S_PTLP}. In Section \ref{S_NI1}, we show how the value function and optimal levels $a^*$ and $b^*$ interact, whereas Section \ref{S_NI2} investigates sensitivity to the main model parameters. In Section \ref{S_NI3} we investigate the impact of volatility of the surplus process on the optimal parameters $a^*$, $b^*$ and the gap between them $b^*-a^*$. For simplicity, in that last section, we focus on a plain vanilla compound Poisson dual model with exponential jumps.

\begin{figure}[htbp]
\begin{center}
\begin{minipage}{1.0\textwidth}
\centering
\begin{tabular}{cc}
 \includegraphics[scale=0.49]{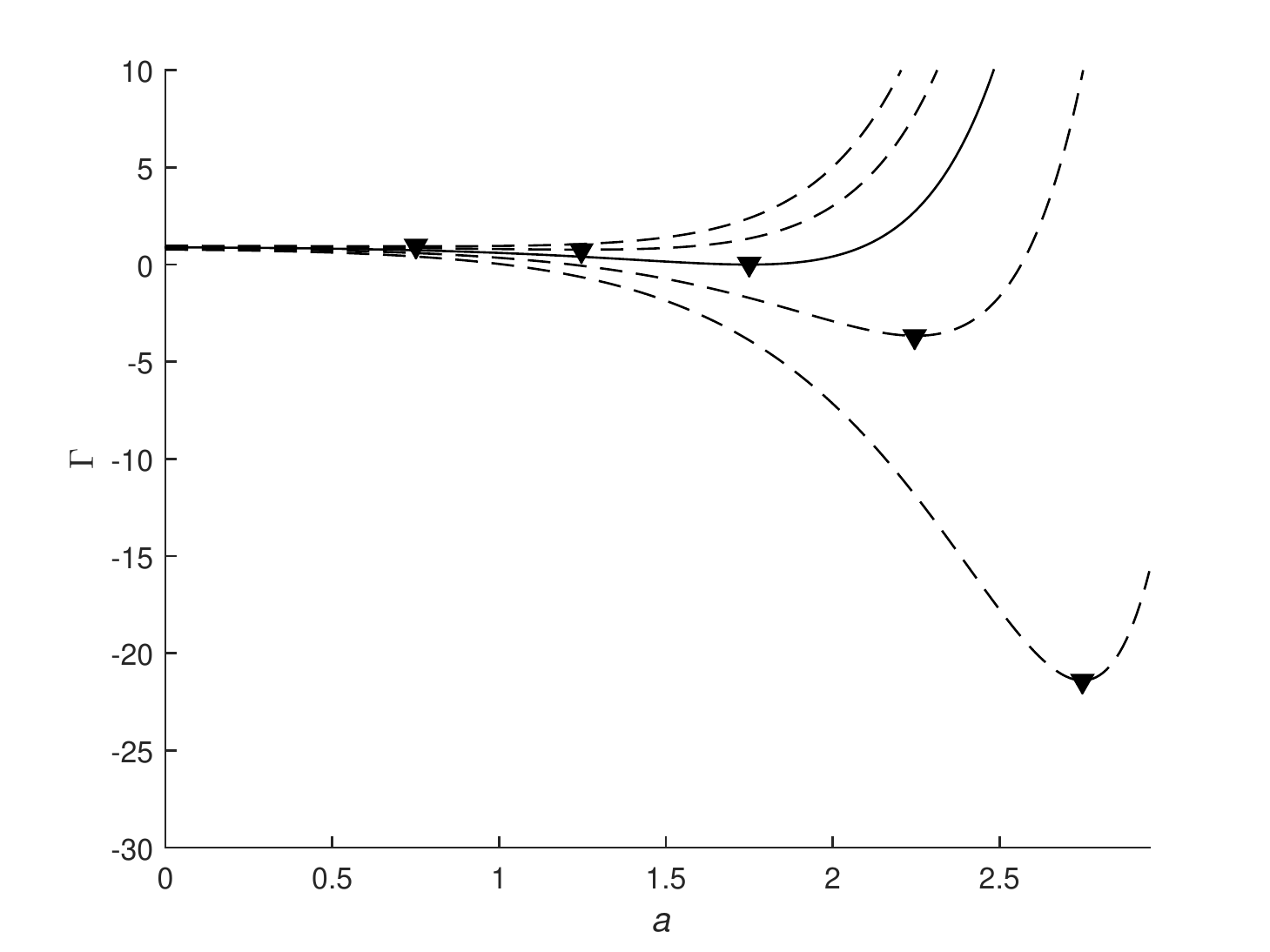} & \includegraphics[scale=0.49]{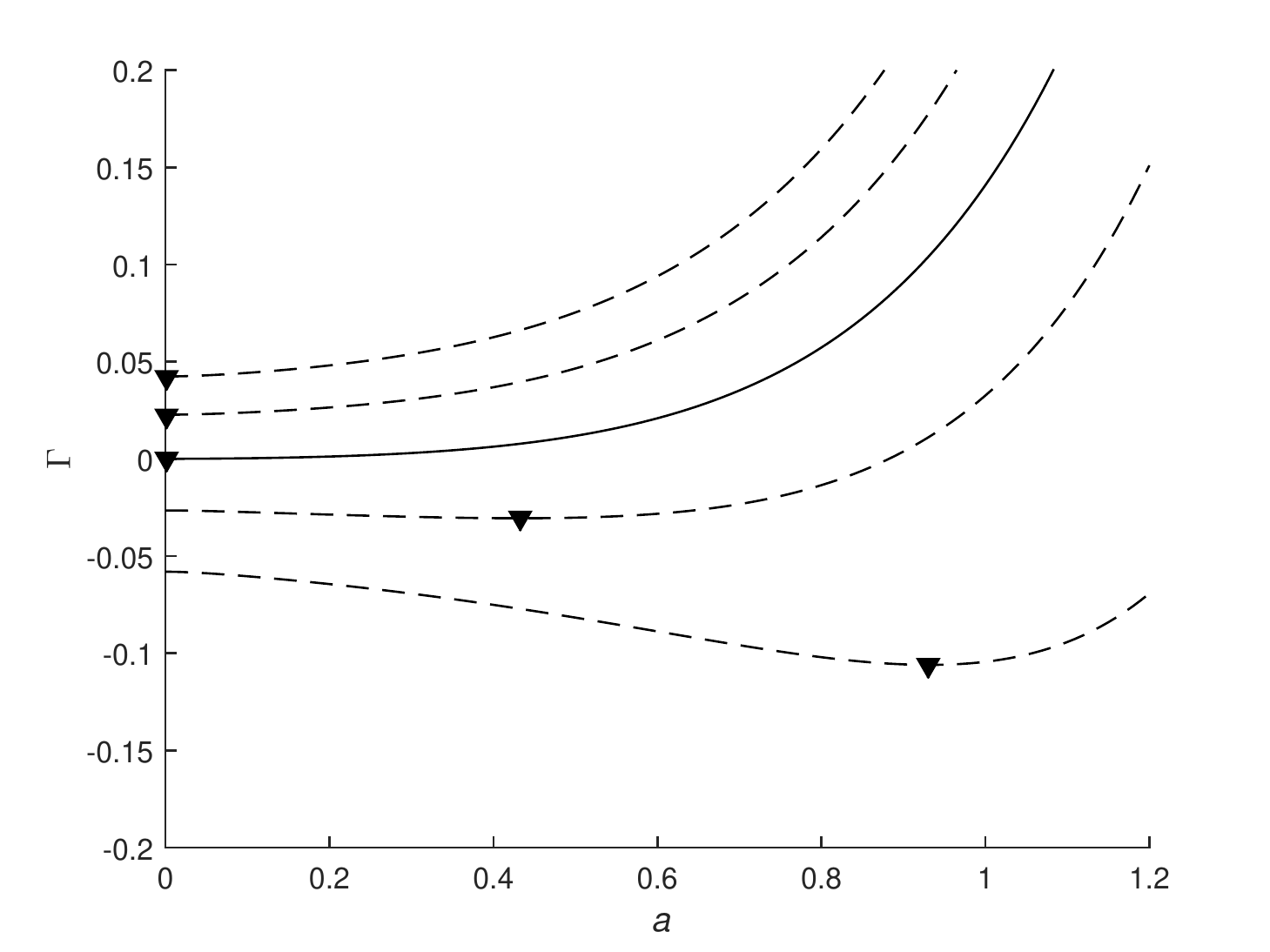}  \\
  \includegraphics[scale=0.49]{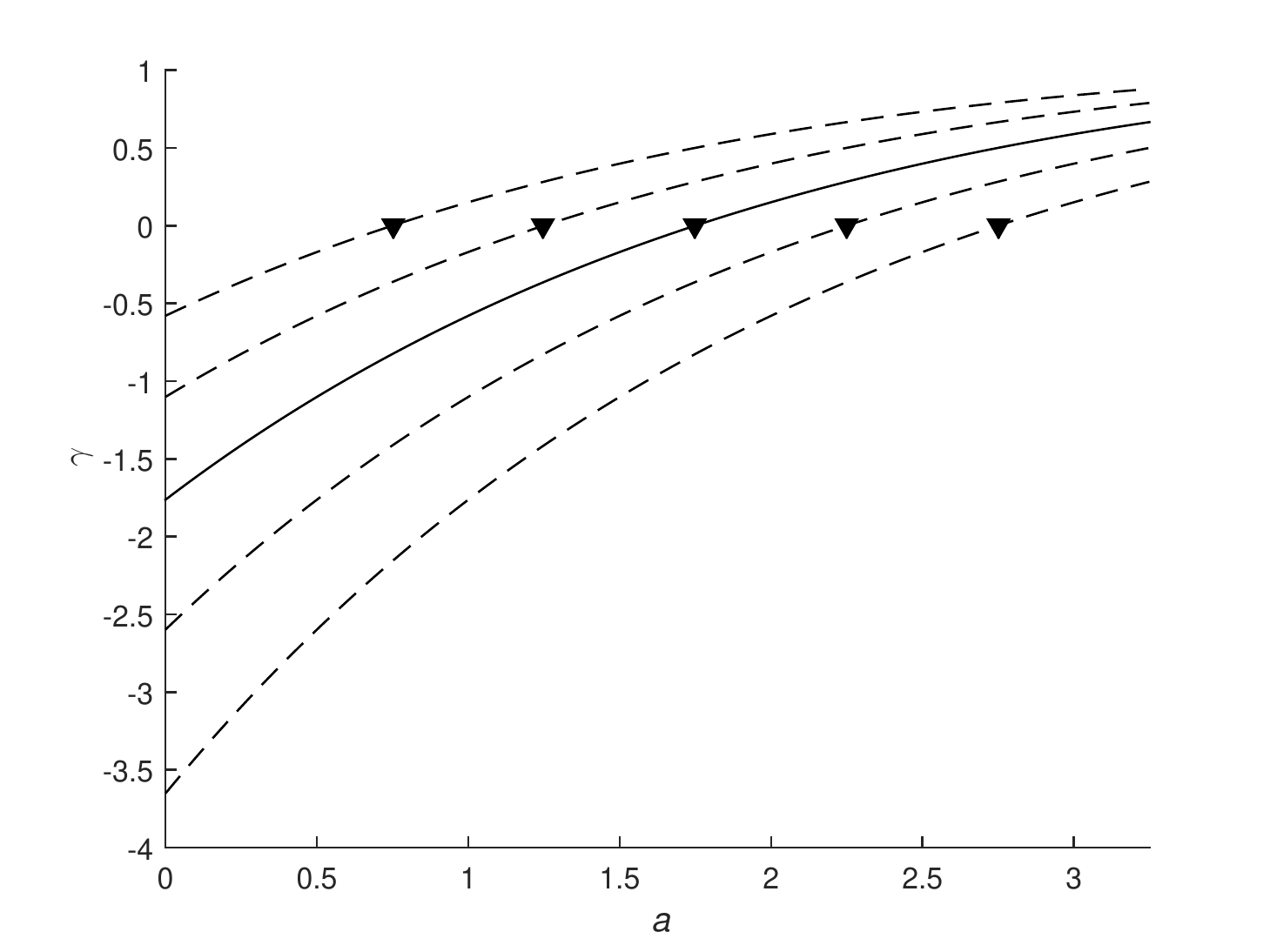} & \includegraphics[scale=0.49]{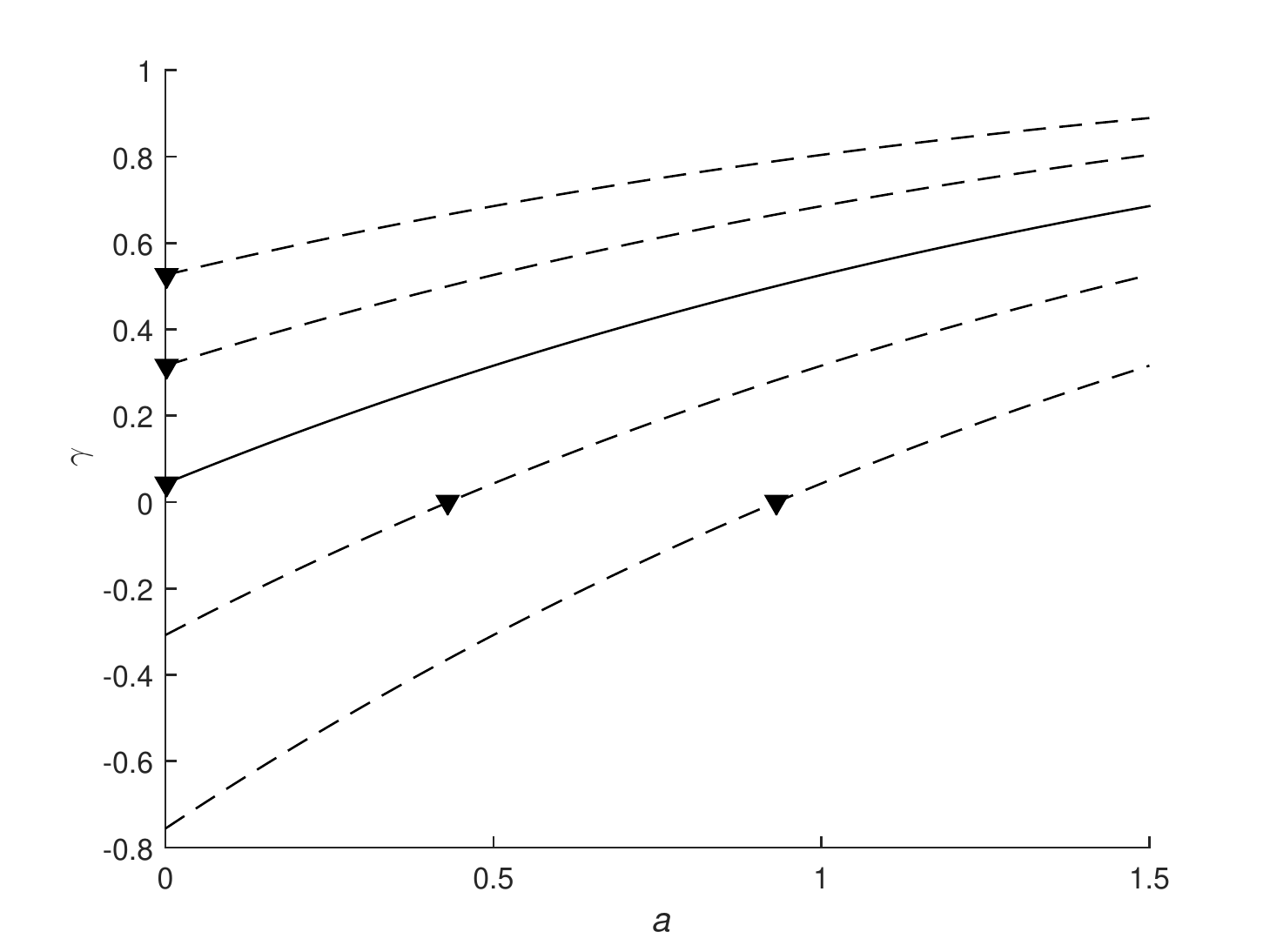}  \\
 \includegraphics[scale=0.49]{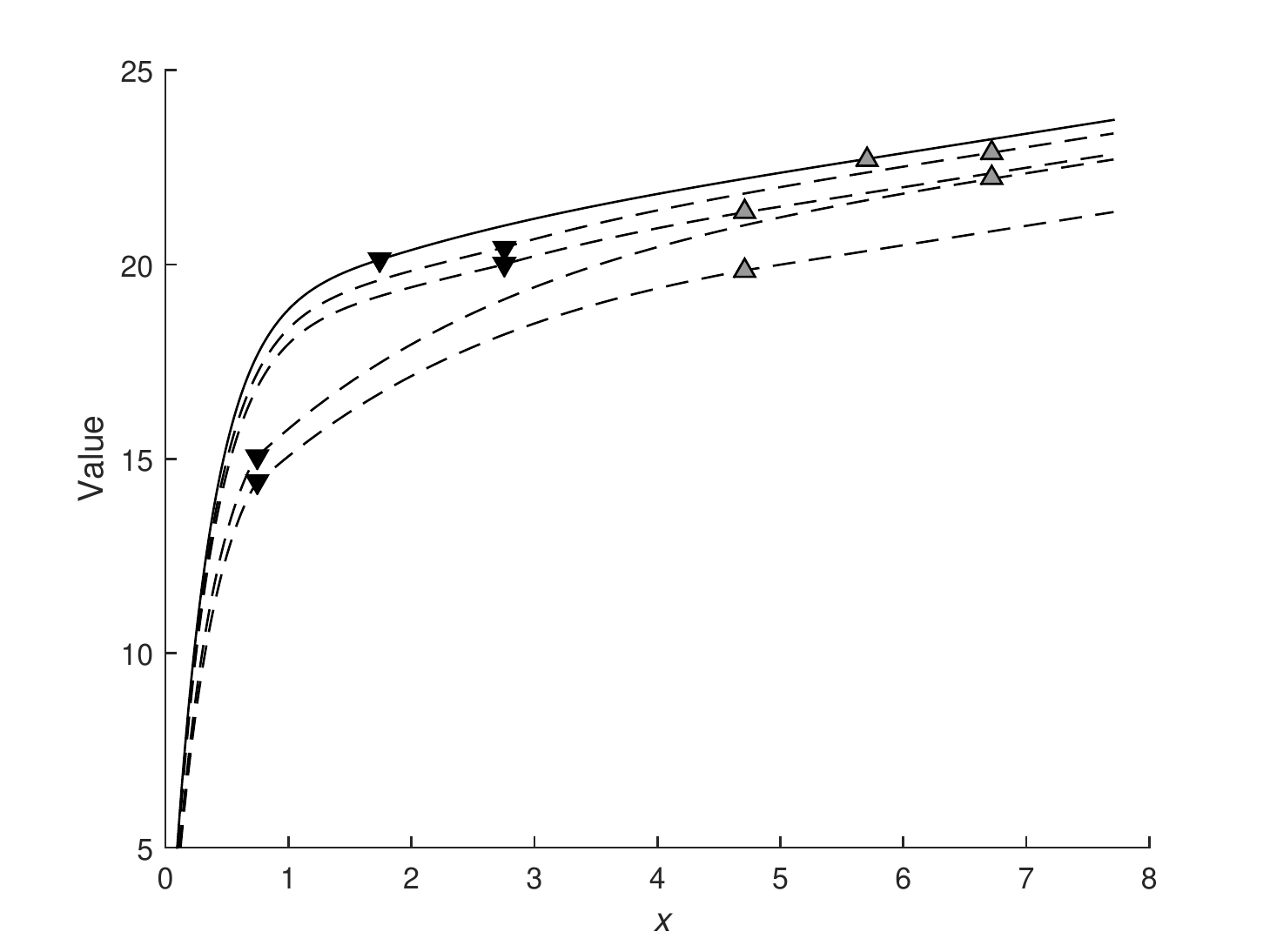} & \includegraphics[scale=0.49]{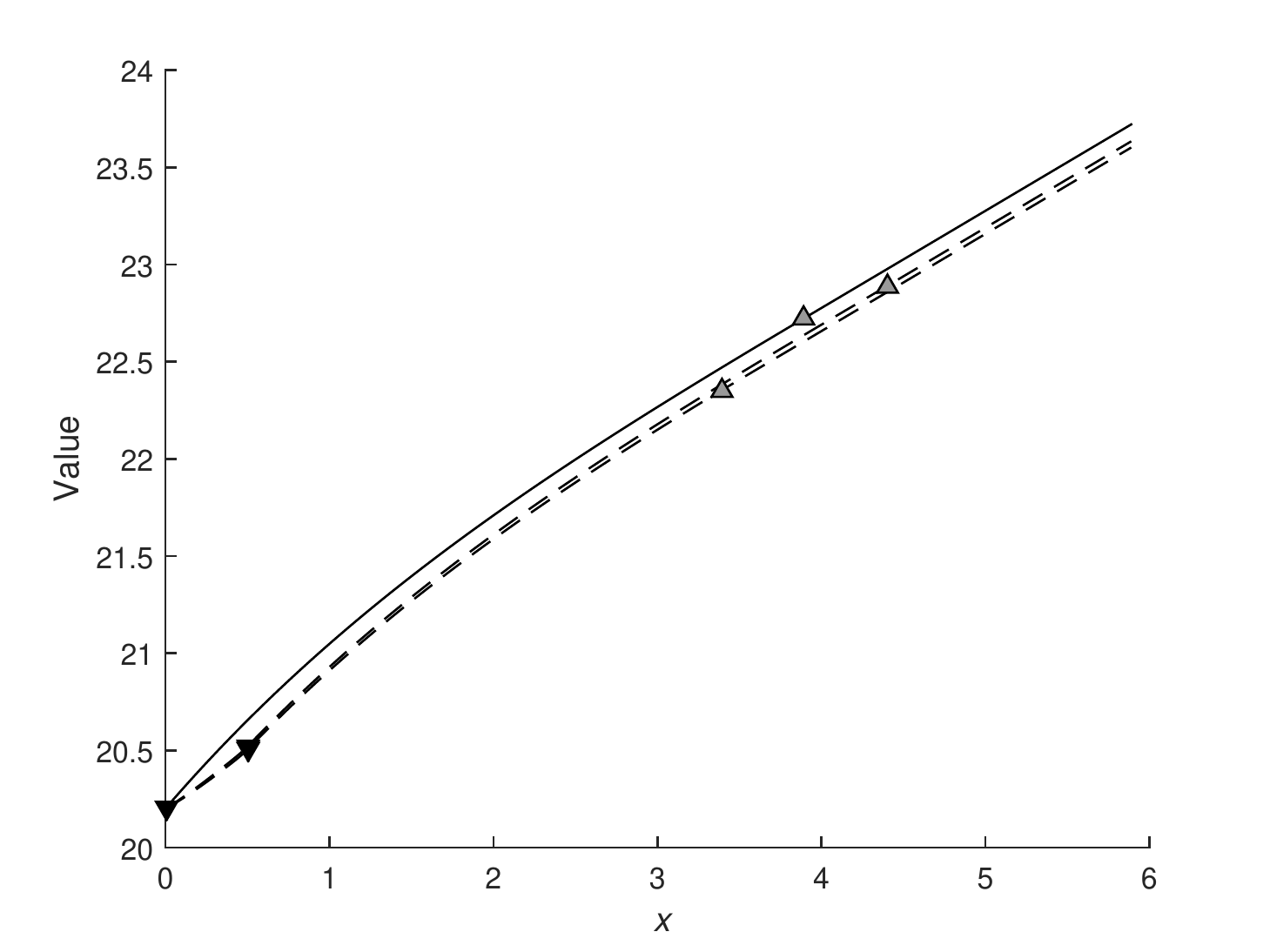} \\
 Case 1 & Case 2 \end{tabular}
\end{minipage}
\caption{(Top row) Plots of $a \mapsto \Gamma(a,b)$, (middle row) $a \mapsto \gamma(a,b)$, and (last row) the value function $v_{a^*, b^*}$ with suboptimal net present values $v_{a, b}$.
} \label{figure_value_function}
\end{center}
\end{figure}

\subsection{Computation of the value function} \label{S_NI1}

We first illustrate the computation of the pair $(a^*, b^*)$ and the value function $v_{a^*,b^*}$.  As in our discussion in Section \ref{subsection_existence}, the function $a \mapsto \gamma(a,b)$ is increasing and  hence the minimizer $a(b)$ of the function $a \mapsto \Gamma(a,b)$ over $[0,b]$ is either positive or zero; for the former case it becomes a local minimum as well. Thus, by a usual bisection method, we can obtain, for $b \geq 0$, $a(b)$ and the infimum function $\underline{\Gamma}(b)$. 
When $\underline{\Gamma}(0) \leq 0$ then $a^* =b^* = 0$, and otherwise $b^*$ becomes the root of $\underline{\Gamma}(b)= 0$.
 The function $\underline{\Gamma}(b)$ is again monotonically decreasing in $b$, and hence another bisection method can be applied to obtain $b^*$.  We then set $a^* = a(b^*)$ and compute the value function $v_{a^*,b^*}$ by the formula \eqref{v_a_b_formula} or \eqref{v_0_0}.

In Figure \ref{figure_value_function} we plot the functions $a \mapsto \gamma(a, b)$, $a \mapsto \Gamma(a, b)$, and $x \mapsto v_{a^*,b^*}(x)$ for Case 1 ($\bar{\rho} := q \rho / \delta = 0$  with $\rho$ the terminal payoff at ruin) where $0 < a^* < b^*$ (left column) and for Case 2 ($\bar{\rho} = 1.01$) where $0 = a^* < b^*$ (right column).  
 In the first and second rows, we plot $\Gamma(\cdot, b)$ and $\gamma(\cdot, b)$, respectively, for $b = b^*-2, b^*-1, b^*, b^*+1, b^*+2$.  The down-pointing triangles indicate the points at $a(b)$. In Case 1, it is confirmed that $a \mapsto \gamma(a,b)$ is increasing and the value of $a^*$ becomes the point at which the curve $a \mapsto \Gamma(a, b^*)$ gets tangent to the $x$-axis.  In Case 2, the curve $a \mapsto \gamma(a, b^*)$ starts at a positive value and increases monotonically (hence it is uniformly positive).  Because $b^*$ is chosen so that $\Gamma(0, b^*) = 0$, the curve $a \mapsto \Gamma(a, b^*)$ is a curve that starts at zero and monotonically increasing. 
 
 In the last row, the value functions are shown by solid lines with $a^*$ and $b^*$ indicated by down- and up-pointing triangles, respectively.  In order to confirm the optimality, we also plot $v_{a,b}$ for perturbed values of $a$ and $b$.  For Case 1 (left), we plot $v_{a,b}$ for $(a,b) = (a^*-1, b^*-1), (a^*-1, b^*+1), (a^*+1, b^*-1), (a^*+1, b^*+1)$.   For Case 2 (right), we plot $v_{a,b}$ for $(a,b) = (a^*+1, b^*-1), (a^*+1, b^*+1)$. We see that $v_{a^*, b^*}$ indeed dominates these uniformly in $x$.  In addition, the smoothness of $v_{a^*, b^*}$ as in Lemma \ref{lemma_smooth_fit} as well as the slope conditions as in Lemma \ref{ver_1} can also be confirmed.

\begin{figure}[htbp]
\begin{center}
\begin{minipage}{1.0\textwidth}
\centering
\begin{tabular}{c}
 \includegraphics[width=.5\textwidth]{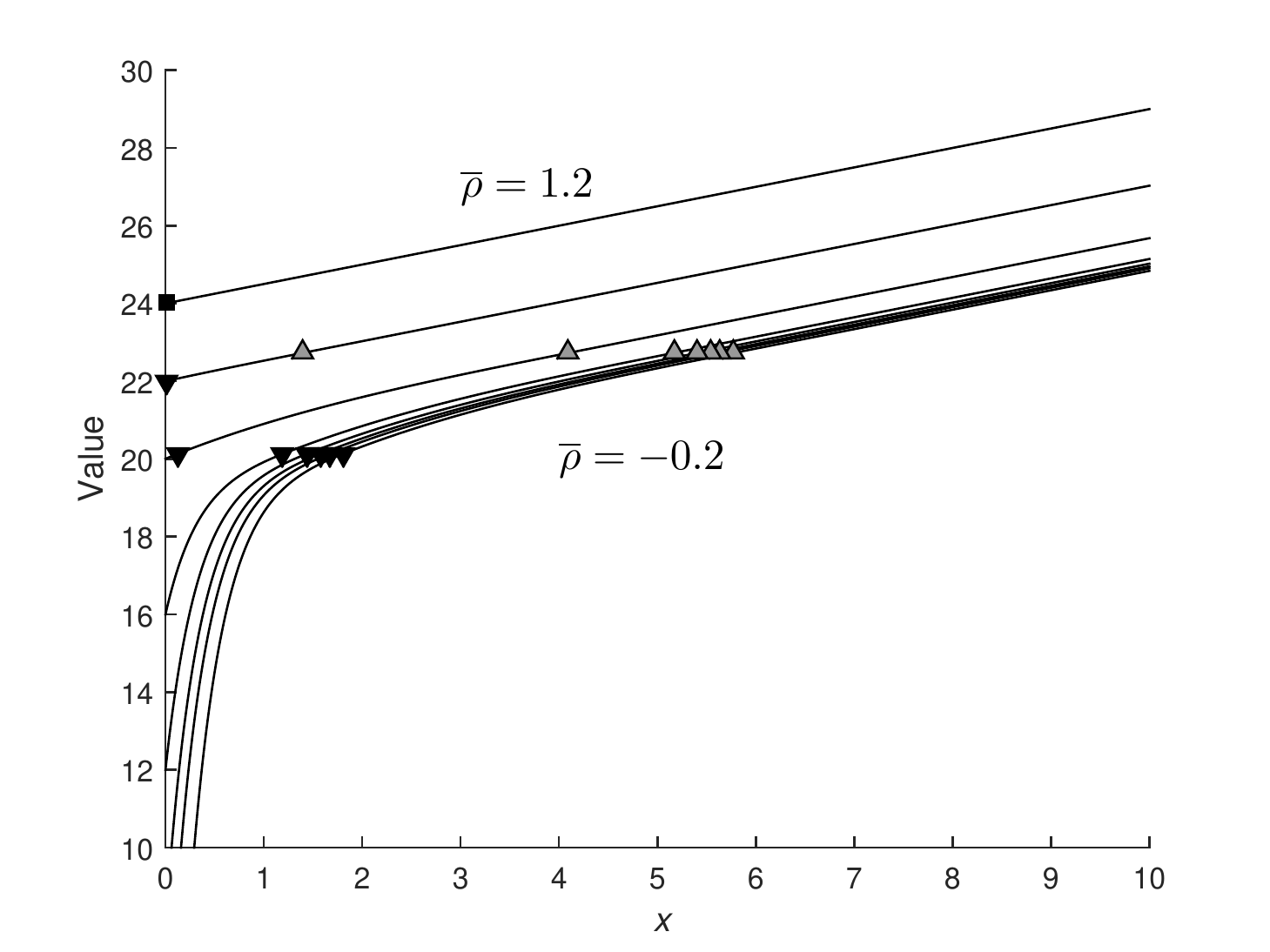} \end{tabular}
\end{minipage}
\caption{Sensitivity of the value function $v(x)$ with respect to $\bar{\rho} := q \rho / \delta$.
} \label{figure_sensitivity_rho}
\end{center}
\end{figure}

\subsection{Sensitivity analysis} \label{S_NI2}
We shall now analyze the sensitivity of $(a^*, b^*)$ and $v_{a^*,b^*}$ with respect to the parameters describing the problem. 

We first study the sensitivity with respect to $\bar{\rho} := q \rho / \delta$ with $q$ and  $\delta$ fixed.

In Figure \ref{figure_sensitivity_rho}, we plot the value function $v_{a^*, b^*}$ for $\bar{\rho}$ ranging from $-0.2$ to $1.2$. 
 The points $a^*$ and $b^*$ are, respectively, indicated by down- and up-pointing triangles; for the case $a^*=b^*=0$, these are indicated by squares.  The value function is monotonically increasing in $\bar{\rho}$ uniformly in $x$.  For sufficiently large $\bar{\rho}$ (in particular when $\bar{\rho} = 1.2$), it is optimal to liquidate ($a^*=b^*= 0$).  As we decrease the value of $\bar{\rho}$ we arrive at $0 = a^* < b^*$ (when $\bar{\rho} = 1.1$) where persistent refraction is optimal.  For sufficiently small $\bar{\rho}$, we have $0 < a^* < b^*$.  In general, we see that both $a^*$ and $b^*$ increase as $\bar{\rho}$ decreases. This is consistent with our intuition: as the terminal payoff at ruin decreases, one tries to avoid ruin and hence decreases the dividend payment.

\begin{figure}[htb]
\begin{center}
\begin{minipage}{1.0\textwidth}
\centering
\begin{tabular}{c}
 \includegraphics[width=.5\textwidth]{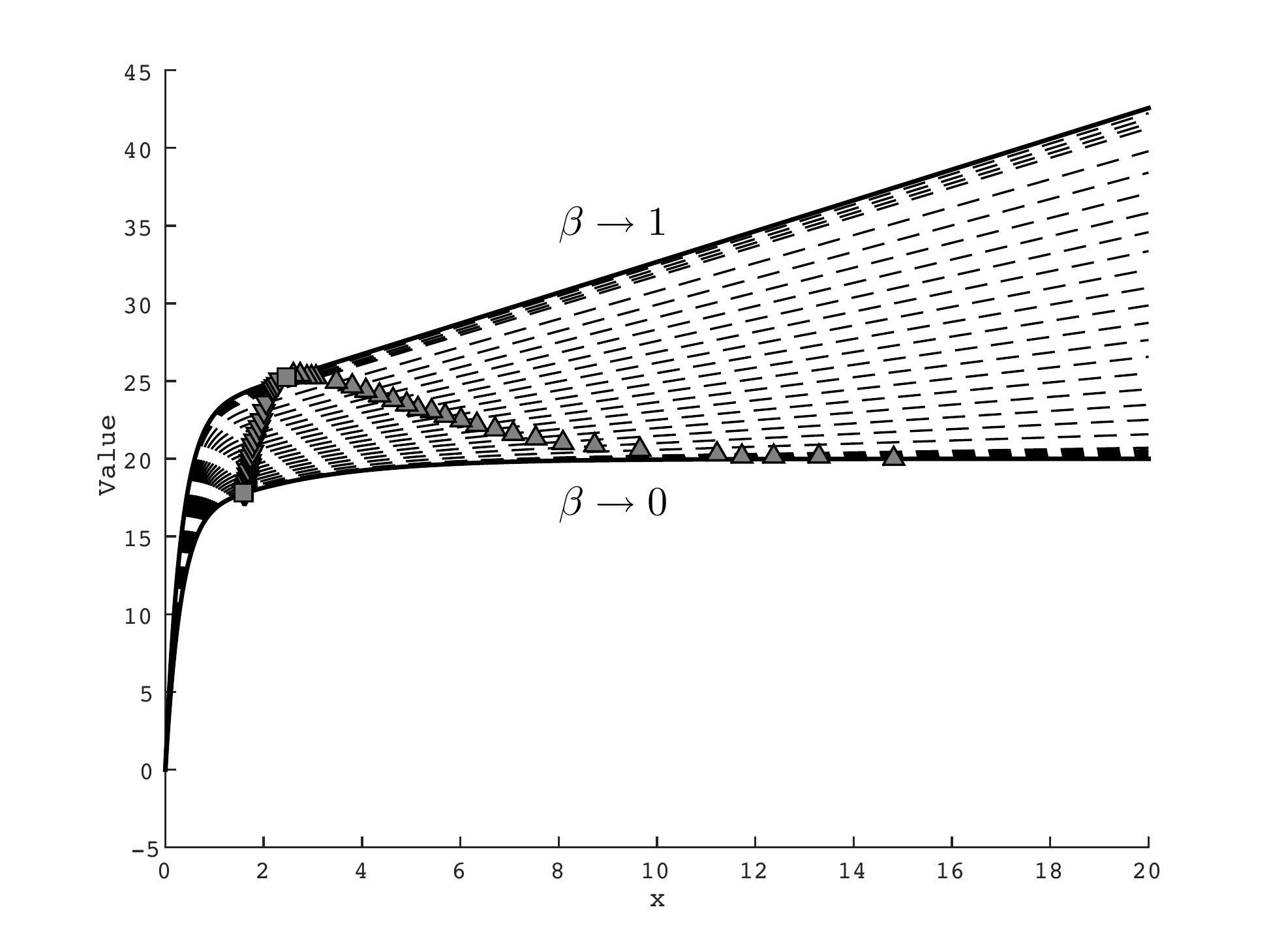} \end{tabular}
\end{minipage}
\caption{Sensitivity of the value function $v(x)$ with respect to $\beta= [0.01,0.02,0.03,0.04,0.05, 0.1, ..., 0.90,0.95, 0.96,0.97,0.98,0.99]$.
} \label{figure_sensitivity_beta}
\end{center}
\end{figure}

Next, as motivated by the results in Section \ref{subsection_convergence}, we shall analyze the relations between our problem and the following existing results:
\begin{enumerate}
\item \citet*{BaKyYa13} considered the case with only singular control that maximizes
\begin{align*}
v_{\pi}^S (x) &:= \mathbb{E}_x \left(   \int_{[0,\sigma^\pi]} e^{-q t}  \diff S_t^\pi   \right), \quad x \geq 0.
\end{align*} On condition that $\psi_Y'(0) < 0$,
 it is optimal to reflect at $b_S$ such that $\Gamma_S(b_S)=0$ where
\begin{align*}
\Gamma_S(b) :=  \overline{\mathbb{Z}}^{(q)}(b) + \frac {\psi'_Y(0+)} q, 
\end{align*}
and the value function is given by
\begin{equation} \label{value_function_bayraktar}
v^S(x) = - \overline{\mathbb{Z}}^{(q)} (b_S-x) - \frac {\psi'_Y(0+)} q, \quad x \geq 0. 
\end{equation}
\item \citet*{YiWe13} considered the case with only absolutely continuous control that maximizes
\begin{align*}
v_{\pi}^A (x) &:= \mathbb{E}_x \left(   \int_0^{\sigma^\pi} e^{-q t}  \diff A_t^\pi   \right), \quad x \geq 0.
\end{align*}
On condition that $\psi_Y'(0) < 0$,  it is optimal to refract at $a_A$ such that $\Gamma_A(a_A)=0$ where
\begin{align*}
\Gamma_A(a) := \delta \mathbb{Z}^{(q)}(a)- \frac q {\Phi(q)} e^{\Phi(q) a}\Big( 1  +\delta \Phi(q) \int_{0}^{a}\mathbb{W}^{(q)}(y) e^{-\Phi(q) y} \ud y \Big),
\end{align*}
and the value function is given by
\begin{equation} \label{value_function_yin}
v^A(x) = - e^{\Phi(q) (a_A-x)} \delta \int_0^{a_A-x} \mathbb{W}^{(q)} (z) e^{-\Phi(q) z} \diff z + \frac \delta q \mathbb{Z}^{(q)}(a_A-x) - \frac {e^{\Phi(q)(a_A-x)}} {\Phi(q)}, \quad x \geq 0. 
\end{equation}
\end{enumerate}

We now analyze the sensitivity with respect to $\beta_S = \beta$  (with $\beta_A=1$ fixed). In Figure \ref{figure_sensitivity_beta}, we plot (in dashed black)  $v_{a^*,b^*}$ for various rate $\beta$ ranging from $0.01$ to $0.99$.  The value function $v_{a^*,b^*}$ is monotonically increasing in $\beta$ uniformly in $x$.  In order to verify numerically the convergence as $\beta$ goes to $1$ and to $0$, we also plot the value functions (in solid black)  $v^S$ as in \eqref{value_function_bayraktar} and $v^A$ as in \eqref{value_function_yin} (with their optimal barriers indicated by squares). As in Section \ref{subsection_convergence} (ii), 
as $\beta$ approaches $1$ (as the cost of singular control relative to absolutely continuous control decreases to zero), the distance between $a^*$ and $b^*$ shrinks.  We confirm in the plot that the optimal barriers as well as the value function converge to those under \citet*{BaKyYa13}.   On the other hand, as the value $\beta$ decreases to zero, as in  Section \ref{subsection_convergence}  (i), $b^*$ goes to $\infty$.  We also confirm that
$a^*$ as well as the value function converge to those of \citet*{YiWe13}.

\begin{figure}[htbp]
\begin{center}
\begin{minipage}{1.0\textwidth}
\centering
\begin{tabular}{c}
 \includegraphics[width=.5\textwidth]{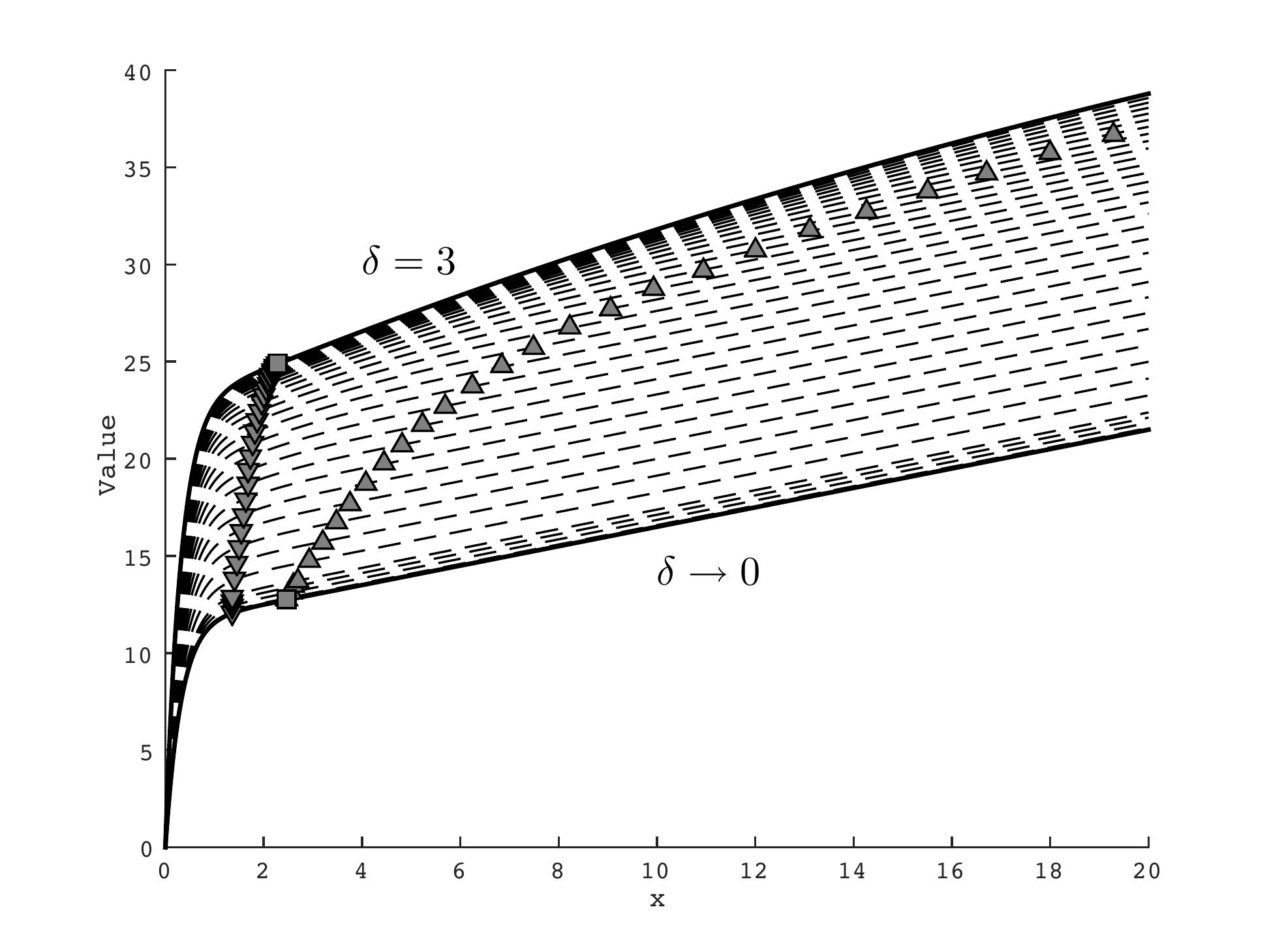} \end{tabular}
\end{minipage}
\caption{Sensitivity of the value function $v(x)$ with respect to $\delta = [0.01, 0.04, 0.07,0.1, 0.2, \ldots, 2.9,3]$.
} \label{figure_sensitivity_delta}
\end{center}
\end{figure}

Finally, we study the sensitivity with respect to $\delta$.  Figure \ref{figure_sensitivity_delta} plots (in dashed black)  the value function for $\delta$ ranging from $0.1$ to $3$ along with the value functions (in solid black) \eqref{value_function_bayraktar} and \eqref{value_function_yin} under \citet*{BaKyYa13} and \citet*{YiWe13} (with $\delta = 3$).  As $\delta$ decreases to zero, the contribution of the absolutely continuous control decreases. Consequently, $b^*$ and value function converge to those under \citet*{BaKyYa13}.   On the other hand, as we increase the value of $\delta$, as in  Section \ref{subsection_convergence} (iii) the value of $b^*$ increases quickly and hence even for a moderate value $\delta =3$, the effect of singular control gets negligible, and converges quickly to the value under \citet*{YiWe13} with the same $\delta$.

\subsection{Impact of the volatility on the optimal layer levels $a^*$ and $b^*$} \label{S_NI3}

In this section, we set $\sigma=0$,  and consider exponential jumps $Z$ with rate $\omega$. The variance (volatility) of the surplus process (per unit time) is $2\kappa/\omega^2$ and the mean of the surplus process is $\mu_Y=\kappa/\omega-c_Y$. We vary the parameter $\omega$ while keeping the ratio $\kappa/\omega$ constant as well as the parameter $c_Y$. A large value of $\omega$ corresponds to a small variance (volatility) of the process.  We plot the barrier levels $a^*$, $b^*$ and $b^*-a^*$ against the volatility of the surplus process defined in \ref{S_PTLP}.

\begin{figure}[htbp]
\centering
\includegraphics[width=0.49\textwidth]{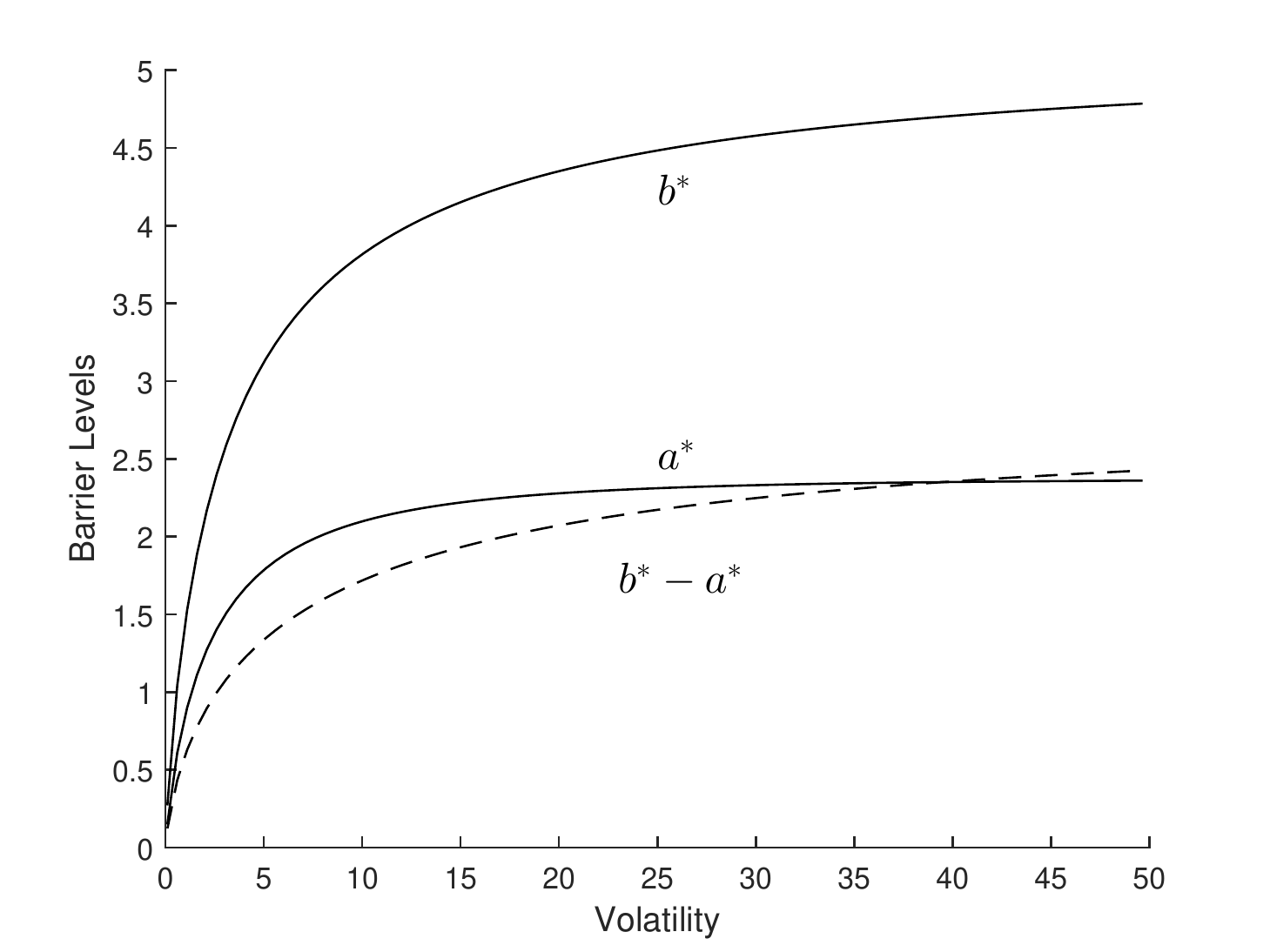}
\includegraphics[width=0.49\textwidth]{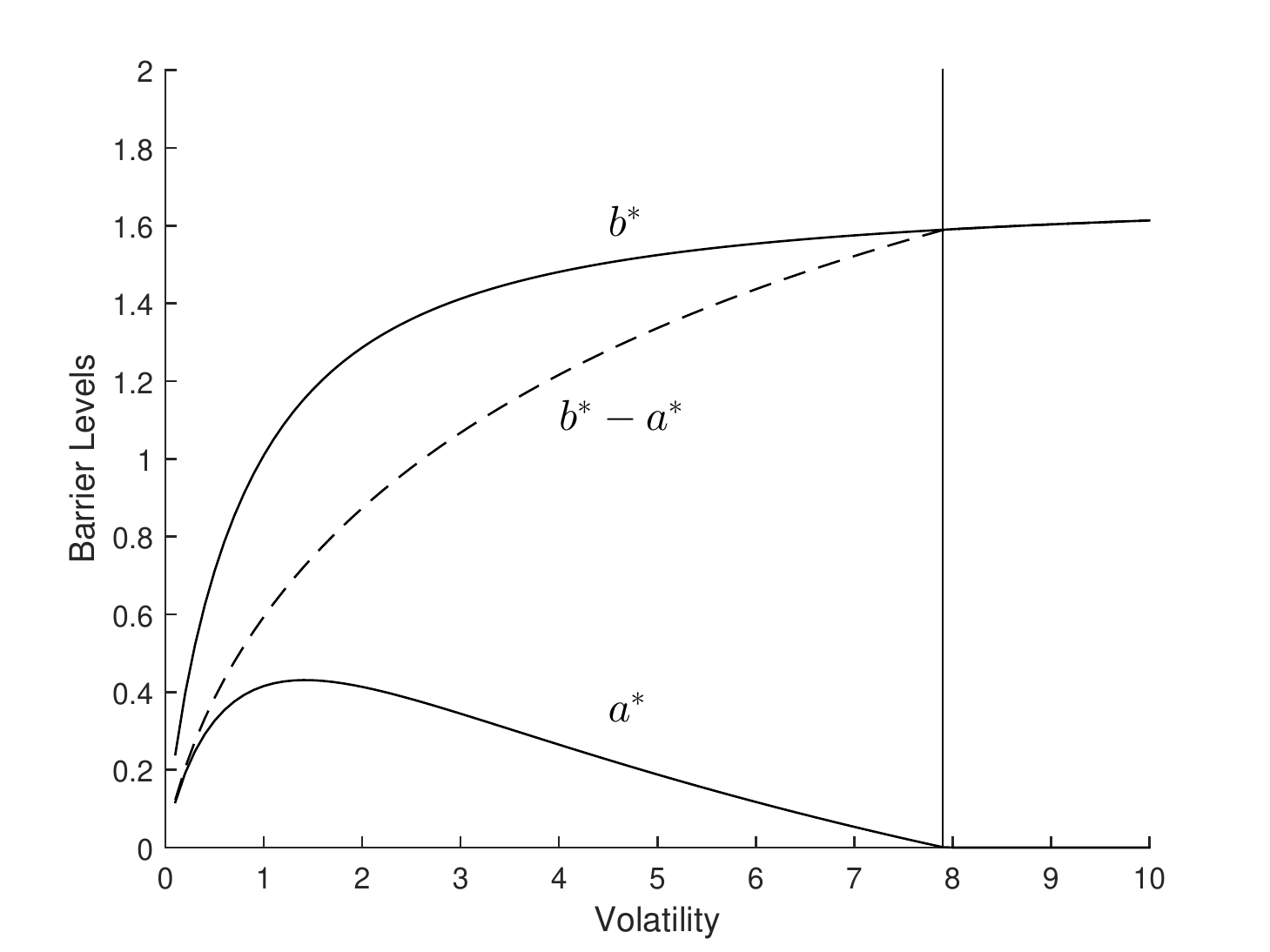}
\includegraphics[width=0.49\textwidth]{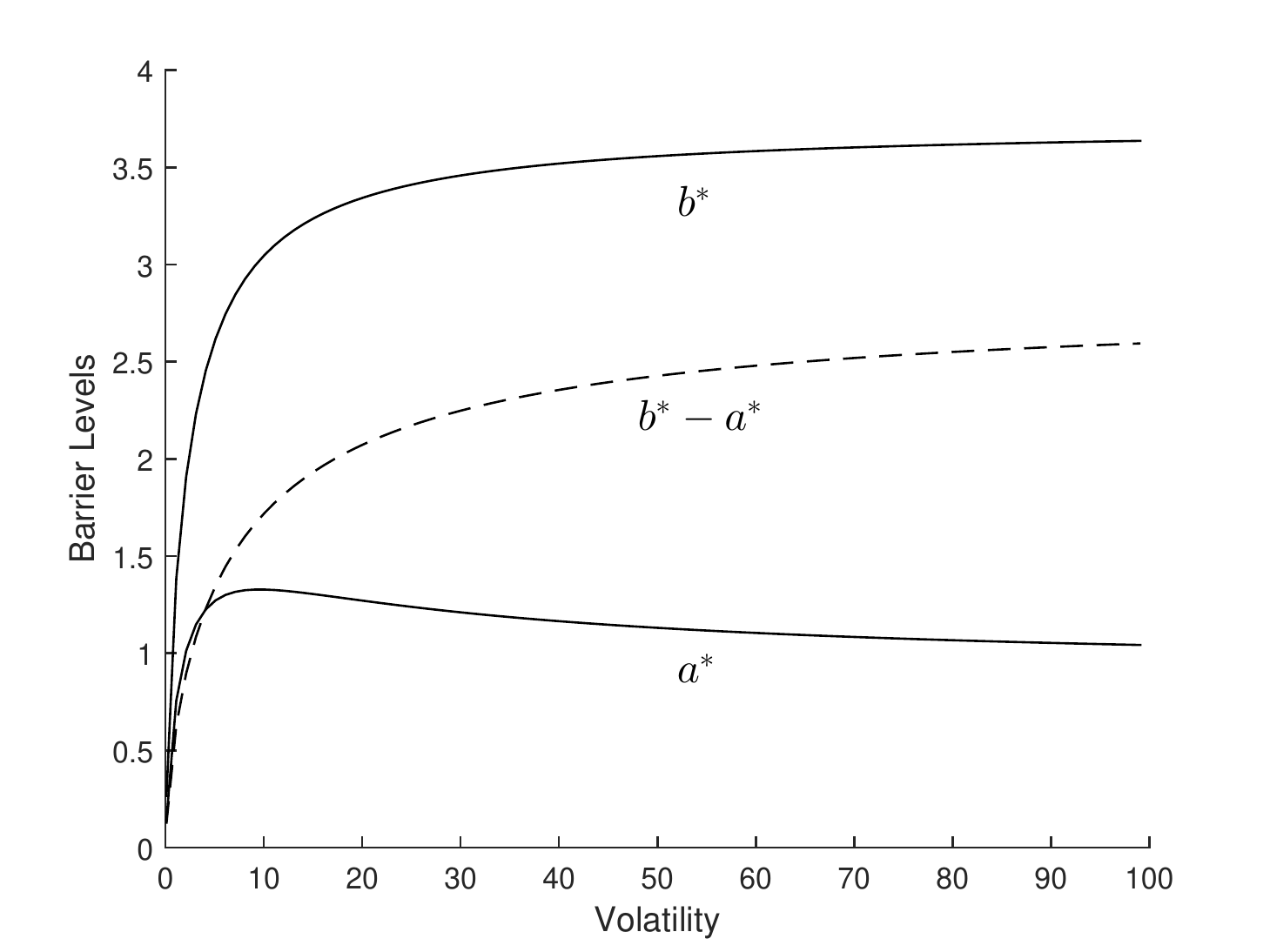}
\caption{Optimal levels $a^*$ and $b^*$ (and their difference) for varying levels of volatility, and when there is a penalty of $-\rho=2$ at ruin (top left), a reward of $\rho=2$ at ruin (top right), or absence of cash flow $\rho=0$ at ruin (bottom).}
\label{F_sensvol1}
\end{figure}

In Figure \ref{F_sensvol1}, optimal levels $a^*$ and $b^*$ (and their difference) are displayed for varying levels of volatility, and when there is a penalty of $-\rho=2$ at ruin (top left), a reward of $\rho=2$ at ruin (top right), or absence of cash flow $\rho=0$ at ruin (bottom). In all cases, both the optimal reflective barrier $b^*$, and the spread $b^*-a^*$ increase with volatility. The story about the optimal refractive level $a^*$ is less clear. In presence of a penalty, it keeps increasing. When there is no penalty or reward, $a^*$ increases initially when the volatility is increasing. However, after it reaches the peak level, it starts to decrease and shows a convergent behavior (to a non-zero value).

Interestingly, when the terminal payoff $\rho$ is large (such as in the top right case) and when volatility increases, it is initially optimal to increase the refractive level $a^*$, but then it vanishes to 0, which seems counterintuitive. In fact, this is because in this case the reward associated to ruin ($\rho>0$) becomes more interesting than future prospects when volatility is too high. However, the reflective level $b^*$ does not vanish, probably due to higher transaction costs. To some extent, this graph provides some insights into the distinction between cases (ii-1) and (ii-2) in Lemma \ref{lemma_existence}.

\section{Conclusion} \label{S_conclusion}

In this paper we considered the impact and combination of absolutely continuous (refractive) and singular (reflective) dividend payments under the assumption of a spectrally positive surplus model. By developing and proving an appropriate verification lemma, we showed that a two-layer $(a,b)$ strategy is the one that will maximise the expected present value of dividends (with any fixed liquidation payment/penalty) in this context. In fact, such a dividend strategy is observed in practice \citep[e.g.][]{GeLo12,BHP16,RioT16}. We derived the value function explicitly with the help of scale functions, and discussed the convergence of our results with the most closely related references \citet*{BaKyYa13} and \citet*{YiWe13}.

The results in this paper could possibly be generalised so as to allow the top layer to be a refractive one as well (although one should ensure that payout rates are higher in that layer). In fact, the model can be seen as \textit{multilayer} dividend strategy \citep*[see, e.g.][]{AlHa07}. We conjecture that there could be as many optimal layers with increasing maximum payout as desired, as long as the transaction costs are ranked in the same way. This is left to future research.

\section*{Acknowledgments}

The authors are grateful to a referee for helpful comments, and to Hayden Lau for research assistance. B.\ Avanzi and B.\ Wong acknowledge support under Australian Research Council's Linkage Projects funding scheme (project number LP130100723). J.\ L.\ P\'erez is supported by CONACYT, project no. 241195. K.\ Yamazaki is in part supported by MEXT KAKENHI Grant Number 26800092. The views expressed herein are those of the authors and are not necessarily those of the supporting organisations.

\appendix

\section{Specification of the numerical illustrations}

\subsection{Dual model with diffusion and phase-type jumps} \label{S_PTLP}

Let $Y$ be
of the form
\begin{equation}
 Y_t  - Y_0= - c_Y t+\sigma B_t + \sum_{n=1}^{N_t} Z_n, \quad 0\le t <\infty, \label{X_phase_type}
\end{equation}
for some $c_Y \in \R$ and $\sigma \geq 0$. 
Here $B=( B_t; t\ge 0)$ is a standard Brownian motion, $N=(N_t; t\ge 0 )$ is a Poisson process with arrival rate $\kappa$, and  $Z = ( Z_n; n = 1,2,\ldots )$ is an i.i.d.\ sequence of phase-type-distributed random variables with representation $(m, {\bm \alpha},{\bm T})$; see \citet*{AsAvPi04}.
These processes are assumed mutually independent. The Laplace exponents of $Y$ and $X$ are then (with ${\bm t} = -\bm{T 1}$ where ${\bm 1} = [1, \ldots 1]'$)
\begin{align*}
 \psi_Y(s)   &= c_Y s + \frac 1 2 \sigma^2 s^2 + \kappa \left( {\bm \alpha} (s {\bm I} - {\bm{T}})^{-1} {\bm t} -1 \right), \\
 \psi_X(s)  &= (c_Y+\delta) s + \frac 1 2 \sigma^2 s^2 + \kappa \left( {\bm \alpha} (s {\bm I} - {\bm{T}})^{-1} {\bm t} -1 \right),
 \end{align*}
which are analytic for every $s \in \mathbb{C}$ except at the eigenvalues of ${\bm T}$.  

Suppose  $( -\zeta_{i,q}; i \in \mathcal{I}_q )$ and $( -\xi_{i,q}; i \in \mathcal{I}_q )$ are the sets of the roots with negative real parts of the equalities $\psi_Y(s) = q$ and $\psi_X(s) = q$, respectively.  We assume that the phase-type distribution is minimally represented and hence $|\mathcal{I}_q| = m+1$ (if $\sigma = 0$, $|\mathcal{I}_q| = m$);
 see \citet*{AsAvPi04}.  As in \citet*{EgYa14}, if these values are assumed distinct, then
the scale functions of $-Y$ and $-X$ can be written, for all $x \geq 0$,
\begin{align}
\mathbb{W}^{(q)}(x) = \frac {e^{\varphi(q) x}} {\psi_Y'(\varphi(q))} - \sum_{i \in \mathcal{I}_q} B_{i,q} e^{-\zeta_{i,q}x} \quad \textrm{and} \quad
W^{(q)}(x) = \frac {e^{\Phi(q) x}} {\psi_X'(\Phi(q))} - \sum_{i \in \mathcal{I}_q} C_{i,q} e^{-\xi_{i,q}x},
\label{scale_function_form_phase_type}
\end{align}
respectively, where
\begin{align*}
B_{i,q} := \left. \frac { s+\zeta_{i,q}} {q-\psi_Y(s)} \right|_{s = -\zeta_{i,q}} = - \frac 1 {\psi_Y'(-\zeta_{i,q})} \quad \textrm{and} \quad
C_{i,q} := \left. \frac { s+\xi_{i,q}} {q-\psi_X(s)} \right|_{s = -\xi_{i,q}} = - \frac 1 {\psi_X'(-\xi_{i,q})}.
\end{align*}

In Sections \ref{S_NI1} and \ref{S_NI2}, we set $c_Y = 0.5$, $\sigma = 0.2$, and $\kappa = 2$ and, for the jump size distribution, we use the phase-type distribution given by $m=6$ and
 \begin{align*}
&{\bm T} = \left[ \begin{array}{rrrrrr}   -5.6546  &  0.0000   &      0.0000  &  0.0000  &  0.0000  &  0.0000 \\
    0.6066  & -5.6847 &   0.0000  &  0.0166  &  0.0089  &  5.0526 \\
    0.2156  &  4.3616  & -5.6485  &  0.9162 &   0.1424 &   0.0126 \\
    5.6247 &   0.0000   & 0.0000  & -5.6786  &  0.0000  &  0.0000 \\
    0.0107  &  0.0000  &  0.0000 &   5.7247 &  -5.7420   & 0.0000 \\
    0.0136  &  0.0000 &   0.0000  &  0.0024 &   5.7022 &  -5.7183 \end{array} \right],  \quad {\bm \alpha} =    \left[ \begin{array}{l}
    0.0000 \\
    0.0007 \\
    0.9961 \\
    0.0000 \\
    0.0001 \\
    0.0031  \end{array} \right];
\end{align*}
this gives an approximation to the Weibull random variable with parameter  $(2,1)$ \citep*[see][for the accuracy of the approximation]{EgYa14}.
For other parameters, unless stated otherwise, we set $\beta_A = 1$,  $\beta_S = 0.5$ (and hence $\beta = 0.5$),  $q = 0.05$, $\delta = 1$ (and hence $c_X = 1.5$), and $\rho  = 0$. For Section \ref{S_NI3}, the parameters used in the models are $\mu_Y=1$, $\kappa / \omega =2$, $c_Y=1$, $q = 0.2$, $\delta = 0.1$, $\beta_A = 1$,  $\beta_S = 0.6$ (and hence $\beta = 0.6$).

\section*{References}
	
\bibliographystyle{elsarticle-harv}
\bibliography{libraries}
	
\end{document}